\documentclass[a4paper,12pt]{article}


\usepackage{amssymb,,amsmath}
\usepackage{graphicx}

\newtheorem{theorem}{Theorem}[section]
\newtheorem{proposition}[theorem]{Proposition}

\newtheorem{lemma}[theorem]{Lemma}

\newenvironment{proof}{{\bf Proof }}{\hfill $\Box$}

\newcommand{\CC}{\mathbb{C}}
\newcommand{\NN}{\mathbb{N}}
\newcommand{\RR}{\mathbb{R}}
\newcommand{\EE}{\mathbb{E}}

\newcommand{\ZZ}{\mathbb{Z}}
\newcommand{\PP}{\mathbb{P}}
\def\ps#1#2{\langle\,{#1}\,,\,{#2}\,\rangle}

\newcommand{\rB}{\mathcal{B}}

\newcommand{\rF}{\mathcal{F}}
\newcommand{\rE}{\mathcal{E}}
\newcommand{\rH}{\mathcal{H}}
\newcommand{\rK}{\mathcal{K}}
\newcommand{\rN}{\mathcal{N}}
\newcommand{\rS}{\mathcal{S}}
\newcommand{\rM}{\mathcal{M}}

\newcommand{\rL}{\mathcal{L}}
\newcommand{\rV}{\mathcal{V}}

\newcommand{\Tr}{\mathrm{Tr}}

\newcommand{\s}{\sigma}
\font\timesept=cmr7
\font\timehuit=cmr8

\def\O{\Omega}

\def\tr{{\mbox{tr}}}

\def\O{\Omega}
\def\wt{\widetilde}
\def\indic{{\mathop{\rm 1\mkern-4mu l}}}
\def\qq{\qquad}

\def\wt{\widetilde}

\def\cd{\cdot}

\def\ol{\overline}
\def\sm{{\scriptstyle -}}

\def\Tr{\mathop{\rm Tr\,}\nolimits}
\def\tr{\mathop{\rm Tr\,}\nolimits}
\def\tri{\mathop{\rm Tr}\nolimits}

\def\ker{\mathop{\rm Ker\,}\nolimits}

\def\ps#1#2{\langle #1\, ,\, #2\rangle}

\def\norme#1{\left\| #1\right\|}
\def\normca#1{{\left\| #1\right\|}^2}
\def\ab#1{\left\vert #1\right\vert}

\def\vect#1{\vec{#1}}

\def\O{\Omega}

\def\D{\Delta}
\def\a{\alpha}
\def\b{\beta}
\def\s{\sigma}

\def\r{\rho}

\def\e{\varepsilon}

\def\l{\lambda}
\def\g{\gamma}
\def\o{\omega}





\usepackage{color}
\definecolor{Red}{rgb}{1,0,0}
 
\def\aaa{{\mathcal A}}


\begin{document}


\title{Central Limit Theorems\\for Open Quantum Random Walks\\and Quantum Measurement Records\footnote{Work supported by ANR project ``HAM-MARK", N${}^\circ$ ANR-09-BLAN-0098-01}}

\author{S. Attal, N. Guillotin-Plantard, C. Sabot}

\date{}

\maketitle

\begin{abstract}
Open Quantum Random Walks, as developed in \cite{APSS}, are a quantum generalization of Markov chains on finite graphs or on lattices. These random walks are typically quantum in their behavior, step by step, but they seem to show up a rather classical asymptotic behavior, as opposed to the quantum random walks usually considered in Quantum Information Theory (such as the well-known Hadamard random walk). Typically, in the case of Open Quantum Random Walks on lattices, their distribution seems to always converge to a Gaussian distribution or a mixture of Gaussian distributions. In the case of nearest neighbors homogeneous Open Quantum Random Walks on $\ZZ^d$ we prove such a Central Limit Theorem, in the case where only one Gaussian distribution appears in the limit. Through the quantum trajectory point of view on quantum master equations, we transform the problem into studying a certain functional of a Markov chain on $\ZZ^d$ times the Banach space of quantum states. The main difficulty is that we know nothing about the invariant measures of this Markov chain, even their existence. Surprisingly enough, we are able to produce a Central Limit Theorem with explicit drift and explicit covariance matrix. The interesting point which appears with our construction and result is that it applies actually to a wider setup: it provides a Central Limit Theorem for the sequence of recordings of the quantum trajectories associated to any completely positive map. This is what we show and develop as an application of our result.

In a second step we are able to extend our Central Limit Theorem to the case of several asymptotic Gaussians, in the case where the operator coefficients of the quantum walk are block-diagonal in a common basis.
\end{abstract}

\tableofcontents

\section{Introduction}
Quantum Random Walks, such as the Hadamard quantum random walk, are nowadays a very active subject of investigations, with applications in Quantum Information Theory in particular (see \cite{Kem} for a survey). 
These quantum random walks are particular discrete-time quantum dynamics on a state space of the form $\rH\otimes \CC^{\ZZ^d}$. The space $\CC^{\ZZ^d}$ stands for a state space labelled by a lattice $\ZZ^d$, while the space $\rH$ stands for the degrees of freedom given on each point of the lattice. The quantum evolution concerns pure states of the system which are of the form 
$$
\vert\Psi\rangle=\sum_{i\in\ZZ^d} \vert\varphi_i\rangle\otimes\vert i\rangle\,.
$$
After one step of the dynamics, this state is transformed into another pure state,
$$
\vert\Psi'\rangle=\sum_{i\in\ZZ^d} \vert\varphi'_i\rangle\otimes\vert i\rangle\,.
$$
Each of these two states gives rise to a probability distribution on $\ZZ^d$, the one we would obtain by measuring the position on $\CC^{\ZZ^d}$:
$$
\mbox{Prob}(\{i\})=\normca{\varphi_i}\,.
$$
The point is that the probability distribution associated to $\vert \Psi'\rangle$ cannot be deduced from the distribution associated to $\vert \Psi\rangle$ by ``classical rules", that is, there is no classical probabilistic model (such as a Markov transition kernel, or similar) which gives the distribution of $\vert \Psi'\rangle$ in terms of the one of $\vert\Psi\rangle$. One needs to know the whole state $\vert \Psi\rangle$ in order to compute the distribution of $\vert\Psi'\rangle$.

These quantum random walks, have been successful for they give rise to strange behaviors of the probability distribution as time goes to infinity. In particular one can prove that they satisfy a rather surprising Central Limit Theorem whose speed is $n$, instead of $\sqrt n$ as usually, and the limit distribution is not Gaussian, but more like functions of the form (see \cite{Kon})
$$
x\mapsto\frac{\sqrt{1-a^2}\,(1-\l x)}{\pi\,(1-x^2)\,\sqrt{a^2-x^2}}\,,
$$
where $a$ and $\l$ are constants.

\smallskip
In the article \cite{APSS} is introduced a new family of quantum random walks, called \emph{Open Quantum Random Walks}. These random walks deal with density matrices instead of pure states, that is, on a state space $\rH\otimes \CC^{\ZZ^d}$ they consider density matrices of the form
$$
\r=\sum_{i\in\ZZ^d}\r_i\otimes\vert i\rangle\langle i\vert\,.
$$
To this density matrix is attached a probability distribution, associated to the values one would obtain by measuring the position:
$$
\mbox{Prob}(\{i\})=\tr(\r_i)\,.
$$
After one step of the dynamics, the density matrix evolves to another state of the same form
$$
\r'=\sum_{i\in\ZZ^d}\r'_i\otimes\vert i\rangle\langle i\vert\,,
$$
with the associated new distribution. 

In \cite{APSS} it is proved that these Open Quantum Random Walks are a non-commutative extension of all the classical Markov chains, that is, they contain all the classical Markov chains as particular cases, but they also describe typically quantum behaviors. 

\smallskip
Though, as shown on simulations in the same article, it seems that Open Quantum Random Walks of infinite lattices such as $\ZZ^d$ exhibit a rather classical behavior in the limit, that is, their limit distribution seems to always converge to a Gaussian distribution, or to a mixture of Gaussian distributions (including the case of Dirac masses as particular cases of Gaussian distributions). While the quantum random walk, step by step, seems to be very quantum, that is, the distribution at time $n+1$ has nothing to do with the distribution at time $n$ (at least it cannot be deduced from it 
without the complete information of the full density matrix), it appears that asymptotically the quantum random walks becomes more and more classical. 

\smallskip
The aim of this article is to prove, under some conditions, a Central Limit Theorem for these Open Quantum Random Walks and to compute explicitly the characteristics of the associated Gaussian distribution: drift and covariance matrix. 

\smallskip
This article is structured as follows. In Section \ref{S:notations} we recall a certain number of notations and concepts which are very common in the context of Quantum Mechanics: states, density matrices, completely positive maps, etc. Section \ref{S:OQRW} is then devoted to presenting the general mathematical structure of Open Quantum Random Walks and their probability distributions. We end up this section with a series of examples and numerical simulations which illustrate our definitions and which will be covered later on by our Central Limit Theorems.  In Section \ref{S:traj} we explain  the Quantum Trajectory approach to Quantum Master Equations. This approach, which is nowadays very important in the study of Open Quantum Systems, gives a way for Open Quantum Random Walks to be simulated by means of a particular Banach space-valued classical Markov process. In the same section we recall an important ergodic property of quantum trajectories, as proved in \cite{K-M}.

\smallskip
The last sections are the ones where the main theorems are proved. First of all the main Central Limit Theorem is proved in the context of a single asymptotic Gaussian distribution. The proof is based on proving a Central Limit Theorem for a particular martingale associated to the quantum trajectories. This martingale is obtained by the usual method of solving the Poisson equation, which surprisingly can be implemented explicitly in our context, even though we do not have any information on the existence of an invariant measure for the Markov chain associated to quantum trajectories.  Furthermore the parameters of the limit Gaussian distribution are explicitly obtained. 

We then show how our main theorem applies to a wider context: a Central Limit Theorem for the measurement records of a discrete-time trajectory. 

We finally extend the Central Limit Theorem to a context with several asymptotic Gaussians, but with block-diagonal coefficients for the Open Quantum Random Walk. We prove that, in this case, the Open Quantum Random Walk behaves like a mixture of Open Quantum Random Walks with single Gaussian, that is, up to conditioning the trajectories at the beginning, we get a behavior of an OQRW with a single asymptotic Gaussian. We compute several examples which illustrate the different situations of our theorems, we compute the associated asymptotic parameters.

\section{General Notations}\label{S:notations}
We recall here some useful notations and terminologies that shall be used in this article.

All our Hilbert spaces are on the complex field and are separable (if not finite dimensional). For all Hilbert space $\rH$ we denote by $\rB(\rH)$ the Banach space of bounded operators on $\rH$ equipped with the usual operator-norm that we denote by $\norme{\cdot}_\infty$. We denote by $\rL_1(\rH)$ the Banach space of trace-class operators on $\rH$, equipped with the trace-norm $\norme{\cdot}_1$. 

\smallskip
Let $\rH$ be a Hilbert space. For any $\phi\in\rH$ we put $\vert \phi\rangle$ to simply denote the element $\phi$ of $\rH$ (more rigorously, it should be the operator $\l\mapsto \l\phi$ from $\CC$ to $\rH$). We define
$$
\begin{matrix}
\langle\phi\vert&:&\rH&\longrightarrow&\CC\\
&&x&\longmapsto&\ps{\phi}{x}\,.
\end{matrix}
$$
As a consequence of these definitions, the operator $\vert \phi\rangle \langle \phi\vert$ is the orthogonal projector onto $\CC\,\phi$\,. 

\smallskip
Recall that a \emph{density matrix} $\r$ on some Hilbert space $\rH$ is a trace-class, positive operator such that 
$\tr(\r)=1\,.$ The convex set of all density matrices on $\rH$ will be denoted by $\rE(\rH)$. The extreme points of this convex set are the \emph{pure states}, that is, the rank one orthogonal projectors:
$$
\r=\vert\phi\rangle\langle\phi\vert\,,
$$
with $\phi\in\rH$, $\norme\phi=1$. 
The set of pure states on $\rH$ will be denoted by $\rS(\rH)$. 

\smallskip
Let $\rN$ stand for a finite or a countable set of indices.
If $\{A_i\,;\ i\in\rN\}$ is a family of bounded operators on $\rH$ such that
$$
\sum_{i\in\rN} A_i^*\, A_i=I\,,
$$
where the convergence above is understood for the weak topology,
then the mapping 
$$
\r\mapsto\rM(\r)=\sum_{i\in\rN} A_i\, \r\, A_i^*\,,
$$
is well-defined, for the series is $\norme{\cd}_1$-convergent, and the mapping
preserves the property of being a density matrix. It is a so-called \emph{completely positive map}. 

Note that such a completely positive map admits an adjoint map $\rM^*$ acting on the bounded operators on $\rH$. More precisely, the mapping
$$
\rM^*(X)=\sum_{i\in\rN} A_i^*\, X\,A_i\,,
$$
is a strongly convergent series and
satisfies
$$
\tr(\rM(\r)\, X)=\tr(\r\,\rM^*(X))
$$
for all density matrix $\r$ and all bounded operator $X$.

\section{Open Quantum Random Walks}\label{S:OQRW}

\subsection{General Setup}\label{SS:setup}
Let us explain here the setup in which we shall be working. It consists in special cases of Open Quantum Random Walks as described in \cite{APSS}, namely, the case of nearest neighbors, stationary quantum random walks on $\ZZ^d$. Our presentation here is slightly different of the one of \cite{APSS}, for we have adapted our notations to the simpler context that we are studying here.

\smallskip
On $\ZZ^d$ we consider the canonical basis $\{e_1,\ldots,e_d\}$ and we put 
$$
e_{d+j}=-e_j
$$ 
for all $j=1,\ldots, d$. For each $i\in\ZZ^d$ we denote by $N(i)$ the set of its $2d$ nearest neighbors, that is $N(i)=\{i+e_j\,;\ j=1,\ldots, 2d\}$.

We consider the space $\rK=\CC^{\ZZ^d}$, that is, any separable Hilbert space with an orthonormal basis indexed by $\ZZ^d$. We fix an orthonormal basis of $\rK$ which we shall denote by ${(\vert i\rangle)}_{i\in\ZZ^d}$. 
Let $\rH$ be a separable Hilbert space, it 
 stands for the space of degrees of freedom given at each point of $\ZZ^d$. \emph{In the rest of the article we always assume that $\rH$ is finite dimensional}. Consider the space $\rH\otimes \rK$. 

\smallskip
We are given a family $\{A_1,\ldots, A_{2d}\}$ of bounded operators on $\rH$ which satisfies
$$
\sum_{j=1}^{2d} A_j^*\,A_j=I\,.
$$
The idea is that the operator $A_j$ stands for the effect of passing from any point $i\in\ZZ^d$ to its neighbor $i+e_j$. The constraint above has to be understood as follows: ``the sum of all the effects leaving the site $i$ is $I$\,". It is the same idea as the one for transition matrices associated to Markov chains: ``the sum of the probabilities leaving a site $i$ is 1". 

To the family $\{A_1,\ldots, A_{2d}\}$ is then associated a completely positive map on $\rH$, namely:
$$
\rL(\rho)=\sum_{j=1}^{2d} A_j \,\rho\, A_j^*\,.
$$
To the family $\{A_1,\ldots,A_{2d}\}$ is also associated a completely positive map on $\rH\otimes\rK$ as follows. We put
$$
L_i^j=A_j\otimes \vert i+e_j\rangle\langle i\vert
$$
for all $i\in\ZZ^d$, all $j=1,\ldots,2d$. The operator $L_i^j$ emphasizes the idea that while one is passing from site $\vert i\rangle$ to its neighbor $\vert i+e_j\rangle$ in $\rK$, the effect on $\rH$ is the operator $A_j$. It is easy to check that
$$
\sum_{i\in\ZZ^d}\sum_{j=1}^{2d} {L_i^j}^*\,L_i^j=I\,,
$$
where the above series is strongly convergent. Hence, there is a natural completely positive map on $\rH\otimes \rK$ associated to these $L_i^j$'s, by putting
$$
\rM(\rho)=\sum_{i\in\ZZ^d}\sum_{j=1}^{2d} L_i^j\,\rho\,{L_i^j}^*
$$
for all density matrix $\rho$ on $\rH\otimes\rK$. Recall that the series above is convergent in trace-norm.  

In the following, we shall be interested in iterations $\rM^n$ of $\rM$ applied to density matrices of $\rH\otimes\rK$.  
We shall especially be interested in density matrices on $\rH\otimes\rK$ with the particular form
\begin{equation}\label{rho}
\r=\sum_{i\in\ZZ^d}\r_i\otimes \vert i\rangle\langle i\vert\,,
\end{equation}
where each $\r_i$ is not exactly a density matrix on $\rH$: it is a positive (and trace-class operator) but its trace is not 1. Indeed the condition that $\r$ is a state aims to
\begin{equation}\label{sumri}
\sum_{i\in\ZZ^d} \tr(\r_i)=1\,.
\end{equation} 
The reason for such a specialization is that any application of $\rM$ to any density matrix $\rho$ on $\rH\otimes\rK$ leads to a state of the form (\ref{rho}). This form (\ref{rho}) then stays 
stable under the dynamics. Hence the dynamics only deals with states of the form (\ref{rho}).

\smallskip
If $\r$ is a state on $\rH\otimes\rK$ of the form
$$
\r=\sum_i \r_i\otimes \vert i\rangle\langle i\vert\,,
$$
then a measurement of the ``position" in $\rK$, that is, a measurement along the orthonormal basis $(\vert i\rangle)_{i\in\rV}$, would give the value $\vert i\rangle$ with probability 
$$
p(i)=\tr(\r_i)\,.
$$
After applying the completely positive map $\rM$, the state of the system $\rH\otimes\rK$ can be easily checked to be
\begin{equation}\label{E:rLrho}
\rM(\r)=\sum_{i\in\ZZ^d} \left(\sum_{j=1}^{2d} A_j\, \r_{i-e_j}\, A_j^*\right)\otimes \vert iÊ\rangle\langle i\vert\,.
\end{equation}
Hence a measurement of the position in $\rK$ would give that each site $i$ is occupied with probability
\begin{equation}\label{E:pi}
p'(i)=\sum_{j=1}^{2d} \tr\left(A_j\,\r_{i-e_j}\,A_j^*\right)\,.
\end{equation}
And so on, by repeatedly applying $\rM$ to the initial state, we obtain a sequence of probability measures on $\ZZ^d$ which, in general, cannot be described in terms of a classical random walk. Indeed, the probability distribution at step $n+1$ cannot be deduced from the probability distribution at step $n$, we need to know the whole states $\r^{(n)}_i$ and not only their traces $\tr(\r^{(n)}_i)$.

\subsection{Examples}\label{SS:examples}

Let us illustrate the setup above, with some examples.

\smallskip
In the case $d=1$, we describe a quantum random walk on $\ZZ$ with the help of only two bounded operators $B$ and $C$ on $\rH$, satisfying
$$
B^*B+C^*C=I\,.
$$
The operator $B$ stands for the jumps to the left (it corresponds to the operator $A_2$ with the notations of previous subsection) and $C$ stands for the jumps to the right (it corresponds to the operator $A_1$).

Starting with an initial state $\r^{(0)}=\r_0\otimes \vert 0\rangle\langle 0\vert$, after one step we have the state
$$
\r^{(1)}=B\r_0B^*\otimes \vert \sm1\rangle\langle \sm1\vert+C\r_0C^*\otimes \vert 1\rangle\langle 1\vert\,.
$$
The probability of presence in $\vert\sm1\rangle $ is $\tr(B\r_0 B^*)$ and the probability of presence in $\vert 1\rangle$ is $\tr(C\r_0C^*)$. 

After the second step, the state of the system is
\begin{align*}
\r^{(2)}&=B^2\r_0{B^2}^*\otimes \vert\sm2\rangle\langle \sm2\vert+C^2\r_0{C^2}^*\otimes \vert 2\rangle\langle 2\vert+\\
&\ \ \ +\left(CB\r_0B^*C^*+BC\r_0C^*B^*\right)\otimes \vert 0\rangle\langle 0\vert\,.
\end{align*}
The associated probabilities for the presence in $\vert \sm2\rangle$, $\vert0\rangle$, $\vert 2\rangle$ are then 
$$
\tr(B^2\r_0{B^2}^*),\ \ \ \tr(CB\r_0B^*C^*+BC\r_0C^*B^*)\ \ \mbox{and}\ \ \tr(C^2\r_0{C^2}^*)\,,
$$ 
respectively. 

One can iterate the above procedure and generate our open quantum random walk on $\ZZ$.

\smallskip
As further example, take 
$$
B=\frac{1}{\sqrt3}\,\left(\begin{matrix}1&1\\0&1\end{matrix}\right)
\qq\mbox{and}\qq
C=\frac{1}{\sqrt3}\,\left(\begin{matrix}1&0\\-1&1\end{matrix}\right)\,.
$$
The operators $B$ and $C$ do satisfy $B^*B+C^*C=I$. Let us consider the associated open quantum random walk on $\ZZ$. Starting with the state 
$$
\r^{(0)}=\left(\begin{matrix}1&0\\0&0\end{matrix}\right)\otimes \vert0\rangle\langle0\vert\,,
$$
we find the following probabilities for the 4 first steps:
$$
\begin{matrix} 
&\vert-4\rangle&\vert-3\rangle&\vert-2\rangle&\vert-1\rangle&\vert0\rangle&\vert+1\rangle&\vert+2\rangle&\vert+3\rangle&\vert+4\rangle\\
n=0&&&&&1&&&&\\
n=1&&&&
\frac 13&&\frac 23&&&\\
n=2&&&
\frac 19&&\frac 39&& \frac 59&&\\
n=3
&&\frac 1{27}&& \frac 5{27}&&\frac{11}{27}&&\frac{10}{27}&\\
n=4&\frac 1{81}&&\frac{10}{81}&&\frac{27}{81}&&\frac{26}{81}&&\frac{17}{81}\\
\end{matrix} 
$$
The distribution obviously starts asymmetric, uncentered and rather wild. The interesting point is that, while keeping its quantum behavior time after time, simulations show up clearly a tendency to converge to a normal centered distribution.  Figure 1 below shows the distribution obtained at times $n=4$, $n=8$ and $n=20$.

\begin{figure}[h!]
\begin{center}
\leavevmode
{%
      \begin{minipage}{0.4\textwidth}
        \includegraphics[width=4cm,height=3.2cm]
        {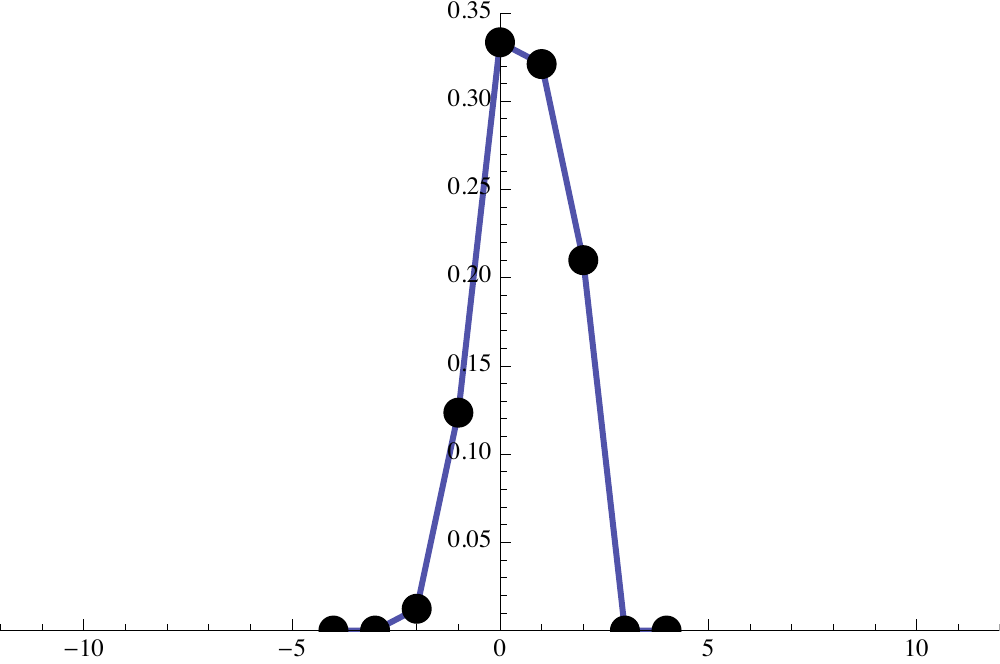}\\
     \vspace{-0.9cm} \strut
      
        \end{minipage}}\hspace*{-0.8cm}
{%
      \begin{minipage}{0.4\textwidth}
        \includegraphics[width=4cm,height=3.2cm]
         {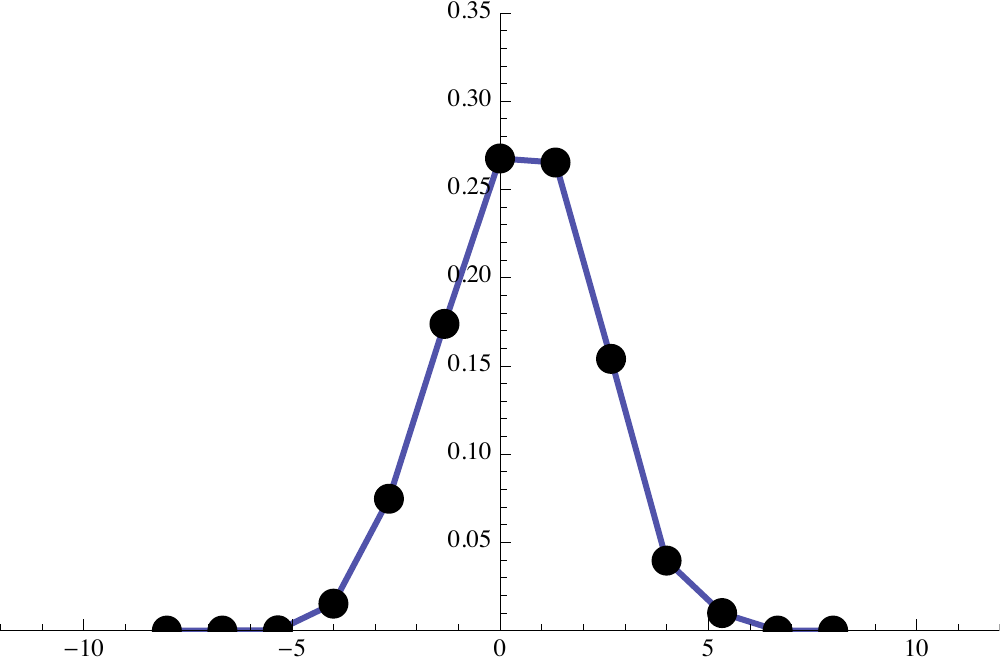}\\
        \vspace{-0.9cm} \strut
        \end{minipage}}\hspace*{-0.8cm}
{%
      \begin{minipage}{0.4\textwidth}
        \includegraphics[width=4cm,height=3.2cm]
        {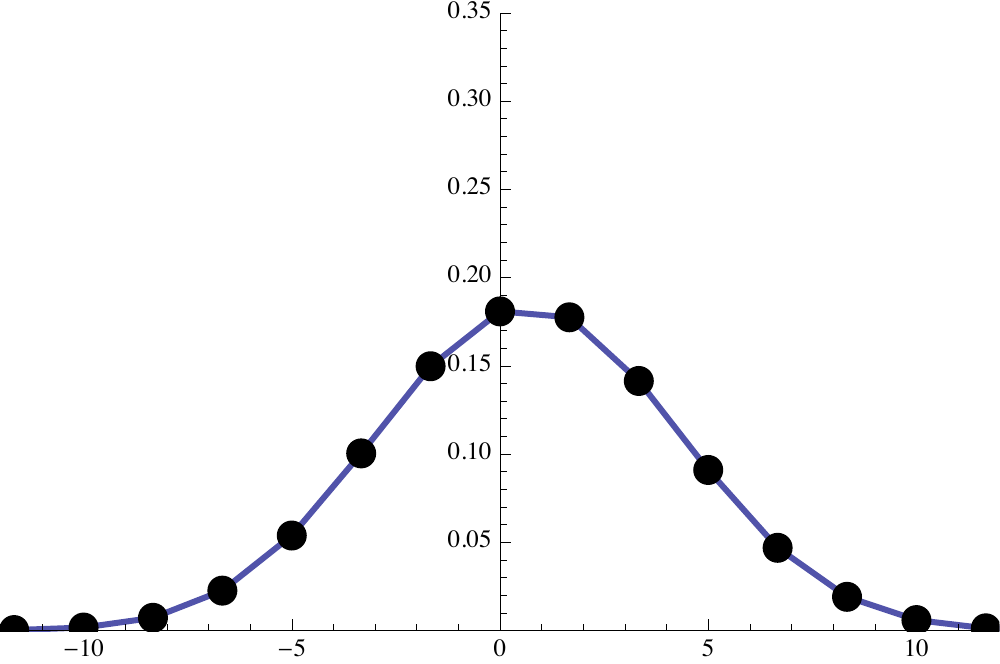}\\
      \vspace{-0.9cm} \strut
        \end{minipage}}
\end{center}
\vskip -0.1cm
\caption{\it An O.Q.R.W. on $\ZZ$ which gives rise to a centered Gaussian at the limit, while starting clearly uncentered (at times 
$n=4$, $n=8$, $n=20$)}
\end{figure} 

\bigskip
A much more trivial example on $\ZZ$ is obtained by taking
$$
B=\left(\begin{matrix}0&\sqrt p\\0&0\end{matrix}\right)\qq\mbox{and}\qq C=\left(\begin{matrix} 1&0\\ 0&\sqrt{1-p}\end{matrix}\right)\,,
$$
It is easy to compute the associated quantum trajectories and to show that they have the behavior of a random walk which goes straight to the right, with only one possible random jump to the left. This example will illustrate our Central Limit Theorem for the particular case where the Gaussian is degenerate. 

\bigskip
It is easy to produce Open Quantum Random Walks on $\ZZ^2$ by specifying 4 matrices $N,W,S,E$ on $\rH$ which satisfy 
\begin{equation}\label{E:2DCP}
N^*N+W^*W+S^*S+E^*E=I\,.
\end{equation}
Then, we ask the random walk to jump from any site to the four nearest neighbors, following $N$, $W$, $S$ or $E$, respectively.

\smallskip
One can for example combine two 1-dimensional Open Quantum Random Walks by asking them to act on the different coordinate axis. For example, take
$$
N=\sqrt\l\,\frac{1}{\sqrt3}\,\left(\begin{matrix}1&1\\0&1\end{matrix}\right)
\qq\mbox{and}\qq S=\sqrt\l\, \frac{1}{\sqrt3}\,\left(\begin{matrix}1&0\\-1&1\end{matrix}\right)
$$
together with
$$
W=\sqrt{(1-\l)}\,\left(\begin{matrix}0&\a\\0&\b\end{matrix}\right)
\ \mbox{and}\ 
E=\sqrt{(1-\l)}\,\left(\begin{matrix}1&0\\0&\g\end{matrix}\right)\,,
$$
with $\a^2+\b^2+\g^2=1$ and for some $\l\in [0,1]$. 

One can obtain behaviors with a single Gaussian, as in Figure 2, with $\l=3/4$, $\a=1/4$, $\b=1/4$.

\begin{figure}[h!]
\begin{center}
\vskip -0.3cm
\leavevmode
\includegraphics[width=6cm,height=6cm]{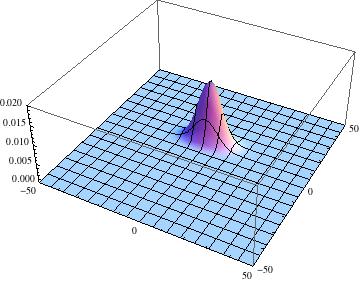}\\
\end{center}
\vskip -0.2cm
\caption{\it An O.Q.R.W. on $\ZZ^2$ which exhibits a single Gaussian asymptotically (at time $n=50$)}
\end{figure} 

\bigskip
The aim of the theorems to come now are to prove such Central Limit Theorems and to identify the elements of the limiting Gaussian distribution.

\section{Quantum Trajectories}\label{S:traj}

\subsection{Simulation of O.Q.R.W.}\label{SS:simulations}
Open Quantum Random Walks have the very nice property to admit a \emph{quantum trajectory approach}, that is, a classical process simulating the evolution of the density matrix. This approach to Open Quantum Random Walks is the one that allows us to prove a central limit theorem. Let us explain here this approach.

\smallskip
Starting from any initial state $\r$ on $\rH\otimes\rK$ we apply the mapping $\rM$ and then a measurement of the position in $\rK$, following the axioms of Quantum Mechanics. We end up with a random result for the measurement and a reduction of the wave-packet gives rise to a random state on $\rH\otimes\rK$ of the form
$$
\r_i\otimes \vert i\rangle\langle i\vert\,.
$$
We then apply the procedure again: an action of the mapping $\rM$ and a measurement of the position in $\rK$. The following result is proved in \cite{APSS}.

\begin{theorem}\label{T:quantum_traj}
By repeatedly applying the completely positive map $\rM$ and a measurement of the position on $\rK$, one obtains a sequence of random states on $\rH\otimes\rK$. This sequence is an homogenous Markov chain with law being described as follows. If the state of the chain at time $n$ is $\o^{(n)}=\r\otimes \vert i\rangle\langle i\vert$, then at time $n+1$ it jumps to one of the values
$$
\o^{(n+1)}=\frac{1}{p(j)}\,A_j\, \r \,{A_j}^*\otimes \vert i+e_j\rangle\langle i+e_j\vert\,,\ \ j=1,\ldots,2d,
$$
with probability
$$
p(j)=\tr\left(A_j \,\r\, {A_j}^*\right)\,.
$$
This Markov chain $(\o^{(n)})$
is a simulation of the master equation driven by $\rM$, that is,
$$
\EE\left[\o^{(n+1)}\,\vert\, \o^{(n)}\right]=\rM(\o^{(n)})\,.
$$
Furthermore, if the initial state is a pure state, then $\o^{(n)}$ stays valued in pure states and the Markov chain is described as follows. If the state of the chain at time $n$ is the pure state $\vert \varphi\rangle\otimes \vert i\rangle$, then at time $n+1$ it jumps to one of the values
$$
\frac{1}{\sqrt{p(j)}}\, A_j\, \vert\varphi\rangle \otimes \vert i+e_j\rangle\,,\ \ i\in\rV,
$$
with probability
$$
p(j)=\normca{A_j \,\vert\varphi\rangle}\,.
$$
\end{theorem}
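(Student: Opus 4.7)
The plan is to derive both the transition law and the martingale-type identity directly from the explicit form of $\rM$ given in (\ref{E:rLrho}), combined with the projective measurement postulate. Starting from any state of the position-diagonal form $\o^{(n)}=\r\otimes\vert i\rangle\langle i\vert$ (which, by the discussion preceding the theorem, is the form stabilized by one step of the dynamics), the formula (\ref{E:rLrho}) specialises to
$$
\rM(\o^{(n)})=\sum_{j=1}^{2d} A_j\,\r\,A_j^* \otimes \vert i+e_j\rangle\langle i+e_j\vert.
$$
Since the $2d$ vectors $\{e_1,\ldots,e_d,-e_1,\ldots,-e_d\}$ are pairwise distinct in $\ZZ^d$, the right-hand side is supported on $2d$ mutually orthogonal position blocks, so no cross-terms will arise when measuring along $(\vert k\rangle)_{k\in\ZZ^d}$.

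Next I would apply the von Neumann measurement of the position, associated to the projectors $P_k=I_\rH\otimes\vert k\rangle\langle k\vert$. A direct trace computation gives
$$
p(j)=\tr\bigl(P_{i+e_j}\,\rM(\o^{(n)})\,P_{i+e_j}\bigr)=\tr(A_j\,\r\,A_j^*),
$$
and the reduction-of-the-wave-packet rule yields the post-measurement state
$$
\o^{(n+1)}=\frac{P_{i+e_j}\,\rM(\o^{(n)})\,P_{i+e_j}}{p(j)}=\frac{1}{p(j)}\,A_j\,\r\,A_j^*\otimes\vert i+e_j\rangle\langle i+e_j\vert
$$
with probability $p(j)$, which is exactly the announced transition law. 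The Markov property is automatic from this construction, since the distribution of $\o^{(n+1)}$ depends only on $\o^{(n)}$, and time-homogeneity follows because the transition law depends on $n$ only through $\o^{(n)}$.

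The expectation identity is then a one-line verification: multiplying each post-measurement state by its probability cancels the normalization, giving
$$
\EE\bigl[\o^{(n+1)}\,\bigl|\,\o^{(n)}\bigr]=\sum_{j=1}^{2d} p(j)\cdot\frac{1}{p(j)}\,A_j\,\r\,A_j^*\otimes\vert i+e_j\rangle\langle i+e_j\vert=\rM(\o^{(n)}).
$$
For the pure-state statement, I would simply note that if $\r=\vert\varphi\rangle\langle\varphi\vert$ is rank one, then $A_j\,\r\,A_j^*=\vert A_j\varphi\rangle\langle A_j\varphi\vert$ is again rank one with trace $\normca{A_j\varphi}$, so normalizing produces the pure state $A_j\vert\varphi\rangle/\norme{A_j\varphi}$ with probability $\normca{A_j\varphi}$, which is exactly the stated pure-state transition.

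I do not expect any real obstacle here: the theorem is essentially a careful bookkeeping exercise combining (\ref{E:rLrho}) with the standard measurement postulate. The only two points requiring attention are the pairwise distinctness of the $2d$ neighbours (to guarantee a well-defined list of orthogonal measurement outcomes) and the observation that the position-diagonal form of the state is preserved under ``apply $\rM$, then measure the position'', so that the induction step for all later times $n\geq 1$ is identical to the initial one.
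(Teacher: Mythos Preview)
Your argument is correct and complete. Note, however, that the paper does not actually supply a proof of this theorem: it states the result and attributes the proof to \cite{APSS}, giving only the informal description (apply $\rM$, measure position, reduce the wave packet) in the paragraph preceding the statement. Your write-up is precisely the formalization of that sketch, so there is nothing further to compare.
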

In a more usual probabilistic language, this means that we have a Markov chain $(\r_n,X_n)_{n\in\NN}$ with values in $\rE(\rH)\times\ZZ^d$ which is described as follows: from any position $(\r,X)$ one can only jump to one of the 2d different values
$$
\left(\frac{1}{p(j)}\,A_j\, \r \,{A_j}^*\,,\,X+e_j\right)
$$
with probability 
$$
p(j)=\tr\left(A_j \,\r\, {A_j}^*\right)\,.
$$
What Theorem \ref{T:quantum_traj} says is that the law of the random variable $X_n$ coincides with the distribution on $\ZZ^d$ of our open quantum random walk at time $n$, when starting with the initial state
$\r_0\otimes\vert X_0\rangle\langle X_0\vert$.

Theorem \ref{T:quantum_traj} also says that if the initial condition is in $\rS(\rH)\otimes\ZZ^d$ then the Markov chain always stays in $\rS(\rH)\otimes\ZZ^d$.

\subsection{Ergodic Property}\label{SS:ergodic}

We now recall an ergodic theorem for quantum trajectories, as proved in \cite{K-M}, that we adapt to our context and notations. Recall the completely positive map on $\rH$ associated to the operators $A_1,\ldots,A_{2d}$\,:
$$
\rL(\r)=\sum_{i=1}^{2d}A_i\,\r\,A_i^*\,.
$$

\begin{theorem}\label{T:ergodic}
If $(\r_n,X_n)$ is the Markov chain obtained by the quantum trajectory procedure as in Theorem \ref{T:quantum_traj} then the sequence
$$
\frac 1n \sum_{i=1}^n \r_i
$$
converges almost surely to a random variable $\theta_\infty$ which is valued in the set of invariant states for $\rL$. 

In particular, if $\rL$ admits a unique invariant state $\r_\infty$, then the above Cesaro mean converges almost surely to $\r_\infty$.
\end{theorem}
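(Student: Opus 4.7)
The plan exploits the fact that the position $X_n$ does not influence the update of $\rho_n$, so one may restrict attention to the Markov chain $(\rho_n)$ alone, which lives in the compact convex set $\rE(\rH)$ thanks to the assumption $\dim\rH<\infty$. From Theorem \ref{T:quantum_traj} one reads off directly that $\EE[\rho_{n+1}\mid\rho_n]=\rL(\rho_n)$. I would then form the Doob martingale
$$
M_n=\rho_n-\rho_0-\sum_{k=0}^{n-1}\bigl(\rL(\rho_k)-\rho_k\bigr),
$$
whose increments take values in $\rE(\rH)-\rE(\rH)$ and are therefore bounded in the finite-dimensional space $\rL_1(\rH)$. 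Applying the martingale strong law of large numbers componentwise gives $M_n/n\to 0$ almost surely. Setting $\theta_n=\frac{1}{n}\sum_{i=1}^n\rho_i$ and rearranging yields $\rL(\theta_n)-\theta_n\to 0$ a.s. By compactness of $\rE(\rH)$ and continuity of $\rL$, every almost sure cluster point of $(\theta_n)$ is therefore a fixed state of $\rL$.

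The main obstacle is to promote this subsequential statement to genuine almost sure convergence of $(\theta_n)$. For this I would pass to the dual picture. For every self-adjoint $X\in\rB(\rH)$ with $\rL^*(X)=X$, the transition rule of Theorem \ref{T:quantum_traj} yields
$$
\EE\bigl[\tr(\rho_{n+1}X)\,\big|\,\rho_n\bigr]=\tr\bigl(\rL(\rho_n)\,X\bigr)=\tr\bigl(\rho_n\,\rL^*(X)\bigr)=\tr(\rho_n X),
$$
so $(\tr(\rho_n X))_n$ is a bounded real martingale. It therefore converges almost surely, and a Cesaro argument gives the almost sure convergence of $\tr(\theta_n X)$ as well.

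To close, I would use the finite-dimensional fact that the trace pairing between the fixed-point subspace of $\rL$ inside $\rL_1(\rH)$ and the fixed-point subspace of $\rL^*$ inside $\rB(\rH)$ is non-degenerate, these two subspaces being dual and of equal dimension. Hence any two distinct invariant states of $\rL$ are separated by some $X$ with $\rL^*(X)=X$, and the preceding paragraph forces all almost sure cluster points of $(\theta_n)$ to coincide. This yields almost sure convergence of $(\theta_n)$ to a (random) invariant state $\theta_\infty$ of $\rL$, and the uniqueness case of the statement is then immediate.
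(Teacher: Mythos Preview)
The paper does not give its own proof of this theorem: it is stated as a recalled result from K\"ummerer--Maassen \cite{K-M}, adapted to the present notations. So there is no in-paper argument to compare your proposal to.

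That said, your argument is sound and is in fact close in spirit to the K\"ummerer--Maassen approach. The two key ingredients you isolate are exactly the right ones: first, that $\bigl(\tr(\rho_n X)\bigr)_n$ is a bounded martingale for every $X$ fixed by $\rL^*$, giving almost sure convergence of $\tr(\theta_n X)$; second, that cluster points of $(\theta_n)$ lie in the fixed-point set of $\rL$ via the Doob decomposition and compactness of $\rE(\rH)$. The one step you pass over a little quickly is the non-degeneracy of the trace pairing between $\ker(I-\rL)$ and $\ker(I-\rL^*)$: this amounts to $\ker(I-\rL)\cap\mathrm{Im}(I-\rL)=\{0\}$, i.e.\ semisimplicity of the eigenvalue $1$ of $\rL$, which holds because a completely positive trace-preserving map is a contraction for $\norme{\cdot}_1$ and hence power-bounded in finite dimension. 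Once this is made explicit, your proof is complete; note also that since $\ker(I-\rL^*)$ is finite-dimensional you only need to intersect finitely many almost-sure events to get a single full-measure set on which the whole argument runs.
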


\section{The Central Limit Theorem}\label{S:CLT}

\subsection{The main Theorem}\label{SS:main}

In this section we make the following hypothesis on $\rL$\,:

\bigskip\noindent
\ \ (H1)\ \ :\ \  \emph{$\rL$ admits a unique invariant state $\r_\infty$.}

\bigskip\noindent
We start with some notations. We put
$$
\boxed{\ \ m=\sum_{i=1}^{2d}\tr(A_i\,\r_\infty\, A_i^*)\, e_i\ \ }
$$
which is an element of $\RR^d$.

In the following we shall denote by $x\cdot y$ the usual scalar product on $\RR^d$. We denote by $m_i$, $i=1,\ldots d$, the coordinates of $m$ in $\RR^d$, that is $m_i=m\cdot e_i$ for $i=1,\ldots, d$. 

\begin{lemma}\label{existe_L} For every $l\in\RR^d$, the equation 
\begin{equation}\label{E:eq_L}
\boxed{\ \ \left(L-\rL^*(L)\right)=\sum_{i=1}^{2d} A_i^*A_i\, (e_i\cdot l)-(m\cdot l)\, I\ \ }
\end{equation}
admits a solution. The difference between any two solutions of \eqref{E:eq_L} is a multiple of the identity.
\end{lemma}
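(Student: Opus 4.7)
\medskip

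\noindent\textbf{Proof proposal.} The plan is to recognize equation (\ref{E:eq_L}) as a Poisson-type equation $(I-\rL^*)(L)=Y$ in the finite-dimensional Banach space $\rB(\rH)$, where
$$
Y\;=\;\sum_{i=1}^{2d}A_i^*A_i\,(e_i\cd l)\,-\,(m\cd l)\,I,
$$
and to analyze $\ker(I-\rL^*)$ and $\mathrm{range}(I-\rL^*)$ via the trace duality between $\rL$ and $\rL^*$. Since $\rH$ is finite dimensional, so is $\rB(\rH)$, and all the linear algebra is elementary.

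First I would observe that $\rL^*$ is unital: $\rL^*(I)=\sum_{i=1}^{2d}A_i^*A_i=I$, so $I\in\ker(I-\rL^*)$. Then, using the trace pairing $\langle \r,X\rangle=\tr(\r X)$ on $\rL_1(\rH)\times \rB(\rH)$ (which is a non-degenerate duality in finite dimension), and the relation $\tr(\rL(\r)X)=\tr(\r\,\rL^*(X))$ recalled in Section~\ref{S:notations}, one gets the orthogonality relations
$$
\mathrm{range}(I-\rL^*)=\ker(I-\rL)^{\perp},\qquad \ker(I-\rL^*)=\mathrm{range}(I-\rL)^{\perp}.
$$
Together with the dimension identity $\dim\ker(I-\rL)=\dim\ker(I-\rL^*)$ that these relations imply, hypothesis (H1) (which gives $\dim\ker(I-\rL)=1$, spanned by $\r_\infty$) yields $\ker(I-\rL^*)=\CC\,I$.

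Next, existence of a solution reduces to verifying $Y\in\ker(I-\rL)^{\perp}$, i.e.\ $\tr(\r_\infty\,Y)=0$. A direct computation gives
$$
\tr(\r_\infty Y)=\sum_{i=1}^{2d}(e_i\cd l)\,\tr(\r_\infty A_i^*A_i)-(m\cd l)\,\tr(\r_\infty)=\sum_{i=1}^{2d}(e_i\cd l)\,\tr(A_i\r_\infty A_i^*)-m\cd l,
$$
which vanishes precisely by the definition of $m$ in the boxed formula above. Hence $Y$ lies in the range of $I-\rL^*$ and (\ref{E:eq_L}) admits at least one solution $L\in\rB(\rH)$.

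Finally, uniqueness up to a scalar multiple of the identity is immediate: if $L_1,L_2$ both solve (\ref{E:eq_L}), then $L_1-L_2\in\ker(I-\rL^*)=\CC\,I$. No step here is really an obstacle; the only subtle point is the correct identification $\ker(I-\rL^*)=\CC\,I$, which is where hypothesis (H1) is used in an essential way via the finite-dimensional duality between $\rL$ and $\rL^*$.
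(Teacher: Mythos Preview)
Your proof is correct and follows essentially the same route as the paper: both verify $\tr(\r_\infty Y)=0$ from the definition of $m$, invoke the duality $\mathrm{range}(I-\rL^*)=\ker(I-\rL)^\perp$ in finite dimension to obtain existence, and then use (H1) together with $\rL^*(I)=I$ to conclude $\ker(I-\rL^*)=\CC\,I$ for the uniqueness part. The only cosmetic difference is that you state the orthogonality relations and the dimension identity a bit more explicitly up front, whereas the paper unwinds them in the course of the argument.
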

\begin{proof}
By definition of $m$ we have, for every $l\in\RR^d$
$$
\sum_{i=1}^{2d}\tr\left(A_i\,\r_\infty\, A_i^*\right)\, e_i\cdot l=m\cdot l\,,
$$
hence
$$
\tr\left(\r_\infty\left(\sum_{i=1}^{2d} A_i^*A_i\, (e_i\cdot l)-(m\cdot l)\, I\right)\right)=0\,.
$$
We have proved that $\sum_{i=1}^{2d} A_i^*A_i\, (e_i\cdot l)-(m\cdot l)\, I$ belongs to $\{\r_\infty\}^\perp$. But $\{\r_\infty\}^\perp$ is equal to $\ker(I-\rL)^\perp$, by Hypothesis (H1). Furthermore $\ker(I-\rL)^\perp$ is equal to the range of $I-\rL^*$. We have proved that 
  $\sum_{i=1}^{2d} A_i^*A_i\, (e_i\cdot l)-(m\cdot l)\, I$ belongs to the range of $I-\rL^*$. This gives the announced existence.
  
If $L'$ is any other solution of \eqref{E:eq_L} then, putting $H=L-L'$ gives
$$
H-\rL^*(H)=0\,.
$$
This is to say that $H$ is an eigenvector of $\rL^*$ for the eigenvalue 1. By the hypothesis (H1), the eigenspace of $\rL$ for the eigenvalue 1 is of dimension 1. Hence the eigenspace of $\rL^*$ for the same eigenvalue is also 1-dimensional. As we have $\rL^*(I)=I$,  this means that all eigenvectors of $\rL^*$ for the eigenvalue 1 are multiple of the identity. Hence $H$ is a multiple of the identity.
\end{proof}

\smallskip
In the following we shall denote by $L_l$ a solution of (\ref{E:eq_L}) associated to $l\in\RR^d$. In the case where $l=e_i$, for some $i=1,\ldots, d$,  we denote $L_l$ by $L_i$ simply. In terms of the coordinates $(l_i)$ of $l$, note that we have
$$
L_l=\sum_{i=1}^d l_i\,L_i\,.
$$
We can now formulate our main Central Limit Theorem.
 
\begin{theorem}\label{T:main}
Consider the stationary open quantum random walk on $\ZZ^d$ associated to the operators $\{A_1,\ldots, A_{2d}\}$. We assume that the completely positive map 
$$
\rL(\r)=\sum_{i=1}^{2d} A_i\,\r\,A_i^*
$$
admits a unique invariant state $\r_\infty$. 
Let $(\r_n,X_n)_{n\geq 0}$ be the quantum trajectory process associated to this open
 quantum random walk,  then 
 $$
 \lim_{n\to \infty} \frac{X_n}{n} = m, \;\;\; \hbox{ a.s. }
 $$
  and
$$
\frac{X_n-n\,m}{\sqrt n}
$$
converges in law to the Gaussian distribution $\rN(0,C)$ in $\RR^d$, with covariance matrix
$$
\fbox{\begin{minipage}{0.5\textwidth}
\begin{align*}
C_{ij}&=\delta_{ij}\,\left(\tr(A_i\,\r_\infty\, A_i^*)+\tr(A_{i+d}\,\r_\infty\,A_{i+d}^*)\right)-m_im_j+\\
&\ \ \ +\left(\tr(A_i\,\r_\infty\, A_i^*\, L_j)+\tr(A_j\,\r_\infty\, A_j^*\, L_i)\right.\\
&\ \ \ \left.-\tr(A_{i+d}\,\r_\infty\, A_{i+d}^*\, L_j)-\tr(A_{j+d}\,\r_\infty\, A_{j+d}^*\, L_i)\right)\\
&\ \ \ -\left(m_i\tr(\r_\infty\, L_j)+m_j\tr(\r_\infty\, L_i)\right)\,.
\end{align*}
\end{minipage}}
$$
\end{theorem}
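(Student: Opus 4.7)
My plan is to reduce the claim to a martingale CLT for the one-dimensional projections $(X_n - nm) \cdot l$ by using the Poisson equation of Lemma \ref{existe_L} to kill the drift. Explicitly, for $l \in \RR^d$ let $L_l$ solve (\ref{E:eq_L}), and set
$$M_n := (X_n - nm)\cdot l + \tr(\rho_n L_l).$$
Using $\EE[\rho_{n+1}\mid\mathcal F_n] = \rL(\rho_n)$ and $\EE[(X_{n+1}-X_n)\cdot l \mid \mathcal F_n] = \tr(\rho_n \sum_j A_j^*A_j(e_j\cdot l))$, the Poisson equation immediately yields $\EE[M_{n+1}-M_n \mid \mathcal F_n] = 0$. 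Finite-dimensionality of $\rH$ makes $L_l$ bounded, so the increments of $M_n$ are uniformly bounded and $|M_n - (X_n-nm)\cdot l|$ stays bounded. A martingale law of large numbers then yields $M_n/n \to 0$ a.s.; applied to $l = e_1,\ldots,e_d$ this already gives $X_n/n \to m$ a.s.

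\textbf{Splitting the quadratic variation.} Given $\rho_{k-1} = \rho$, the chain jumps to $A_j\rho A_j^*/p(j)$ with probability $p(j)=\tr(A_j\rho A_j^*)$. A direct computation gives
$$F_l(\rho) := \EE[(M_k - M_{k-1})^2 \mid \rho_{k-1}=\rho] = F_l^{\mathrm{lin}}(\rho) + G_l(\rho),$$
where
$$F_l^{\mathrm{lin}}(\rho) = \sum_{j=1}^{2d}\tr(A_j\rho A_j^*)(e_j\cdot l - m\cdot l)^2 + 2\sum_{j=1}^{2d}(e_j\cdot l - m\cdot l)\tr(A_j\rho A_j^* L_l)$$
is linear in $\rho$, and $G_l(\rho) = \EE[\tr(\rho_k L_l)^2 \mid \rho_{k-1}=\rho] - \tr(\rho L_l)^2$. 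The crucial algebraic point is that $G_l(\rho_{k-1})$ is the one-step predictable quadratic variation of the bounded process $S_k := \tr(\rho_k L_l)^2$, so a Doob-Meyer telescoping gives $\sum_{k=1}^n G_l(\rho_{k-1}) = (S_n - S_0) - N_n$ with $N_n$ a martingale of bounded increments; both terms are $o(n)$ a.s., so $\frac{1}{n}\sum_{k=1}^n G_l(\rho_{k-1}) \to 0$ a.s. For the linear part, Theorem \ref{T:ergodic} combined with (H1) yields $\frac{1}{n}\sum_{k=1}^n \rho_k \to \rho_\infty$ a.s., hence $\frac{1}{n}\sum_{k=1}^n F_l^{\mathrm{lin}}(\rho_{k-1}) \to F_l^{\mathrm{lin}}(\rho_\infty)$ a.s., and altogether $\frac{1}{n}\langle M\rangle_n \to F_l^{\mathrm{lin}}(\rho_\infty)$ a.s.

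\textbf{Identification and conclusion.} A coordinate expansion of $F_l^{\mathrm{lin}}(\rho_\infty)$ using $e_{j+d}=-e_j$, $L_l = \sum_i l_i L_i$, and $m_i = \tr(A_i\rho_\infty A_i^*) - \tr(A_{i+d}\rho_\infty A_{i+d}^*)$ identifies the limit with $\sum_{i,j}l_i l_j C_{ij}$ for the $C_{ij}$ of the statement. Because the increments of $M_n$ are uniformly bounded, the Lindeberg condition is automatic, so the martingale CLT yields $M_n/\sqrt n \Rightarrow \mathcal N(0, Cl\cdot l)$, and therefore $(X_n-nm)\cdot l/\sqrt n \Rightarrow \mathcal N(0, Cl\cdot l)$; the Cram\'er-Wold device then delivers the full CLT in $\RR^d$. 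The main obstacle, as stressed in the introduction, is that $F_l$ is genuinely nonlinear in $\rho$, so the K\"ummerer-Maassen Ces\`aro theorem, which is a linear statement, cannot \emph{a priori} control $\frac{1}{n}\sum F_l(\rho_{k-1})$; the whole argument hinges on the identification of the nonlinear piece $G_l$ as the conditional second moment of a bounded observable, which reduces the problem to the linear ergodic statement we do have.
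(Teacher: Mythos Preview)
Your proof is correct and follows essentially the same route as the paper: the martingale $M_n=(X_n-nm)\cdot l+\tr(\rho_nL_l)$ coincides (up to an additive constant) with the paper's martingale built from the Poisson equation, and your linear/nonlinear splitting $F_l=F_l^{\mathrm{lin}}+G_l$ is exactly the paper's decomposition of $\EE[(\Delta M_k)^2\mid\mathcal F_{k-1}]$ into the $T_3$-term $\tr(\rho_{k-1}\Gamma_l)$ and the $T_1$-term handled by telescoping plus a bounded-increment martingale. One terminological nit: what you call the ``predictable quadratic variation'' of $S_k=\tr(\rho_kL_l)^2$ is really the increment of its compensator, but the Doob decomposition you then write down is the correct object and the argument goes through unchanged.
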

\begin{proof}
Consider the Markov chain $(\r_n,X_n)_{n\in\NN}$, with values in $\rE(\rH)\times\ZZ^d$, associated to the quantum trajectories of $\rM$. We put $\NN^{*}=\NN\setminus\{0\}$ and $\D X_n=X_n-X_{n-1}$, for all $n\in\NN^*$ and we consider the stochastic process $(\r_n,\D X_n)_{n\in\NN^*}$ which is also a Markov chain, but with values in $\rE(\rH)\times\{e_1,\ldots,e_{2d}\}$. Its transition probabilities are given by
$$
P\left[(\r,e_i);(\r',e_j)\right]=\begin{cases}\tr\left(A_j\,\r\,A_j^*\right)
&
\mbox{if }
\r'=\frac{A_j\,\r\,A_j^*}{\tr(A_j\,\r\,A_j^*)}\,,\\
0&\mbox{otherwise},
\end{cases}
$$
for all $i,j\in\{1,\ldots,2d\}$. 

We are given a fixed $l\in\RR^d$ and we wish to write a Central Limit Theorem for $(X_n\cdot l)_{n\in\NN}$. Our first step is to find a solution to the so-called \emph{Poisson equation}, that is, we wish to find a function $f$ on $\rE(\rH)\times\{e_1,\ldots,e_{2d}\}$ such that 
\begin{equation}\label{E:poisson}
(I-P)f(\r,x)=x\cdot l-m\cdot l\,.
\end{equation}

\begin{lemma}\label{L:form_f}
A solution of (\ref{E:poisson}) is given by 
\begin{equation}\label{E:sol_Poisson}
f(\r,x)=\tr(\r\, L_l)+x\cdot l\,.
\end{equation}
\end{lemma}
\begin{proof}[of Lemma \ref{L:form_f}]

If we define $f$ by
$$
f(\r,x)=\tr(\r\, L_l)+x\cdot l\,,
$$
we get
\begin{multline*}
(I-P)f(\r,x)=\tr(\r\,L_l)+x\cdot l-\left(\sum_{i=1}^{2d} \tr(A_i\,\r\,A_i^*\, L_l)+\right.\hfill\\
\hfill\left.+\sum_{i=1}^{2d} \tr(A_i\, \r\, A_i^*)\, e_i\cdot l\right)\\
\hphantom{(I-P)f(\r,x)}=\tr\left(\r\, \left(\left(L_l-\rL^*(L_l)\right)-\sum_{i=1}^{2d} A_i^*A_i\, e_i\cdot l\right)\right)+x\cdot l\hfill\\
\hphantom{(I-P)f(\r,x)}=-m\cdot l+x\cdot l\,.\hfill
\end{multline*}
That is, the function $f$ is a solution of the Poisson equation.
\end{proof}[of Lemma]

\smallskip
The second step of the proof consists in carrying the problem of our central limit theorem to a central limit theorem for a martingale.

With the help of the Poisson equation, we have
\begin{align*}
X_n\cdot l-n(m\cdot l)&=X_0\cdot l+\sum_{k=1}^n\left((X_k-X_{k-1})-m\right)\cdot l\\
&=X_0\cdot l+\sum_{k=1}^n(I-P)f(\r_k,\D X_k)\\
&=X_0\cdot l+\sum_{k=2}^n\left(f(\r_k,\D X_k)-Pf(\r_{k-1},\D X_{k-1})\right)\\
&\quad+f(\r_1,\D X_1)-Pf(\r_n,\D X_n)\,.
\end{align*}
We put 
$$
M_n=\sum_{k=2}^nf(\r_k,\D X_k)-Pf(\r_{k-1},\D X_{k-1})\,.
$$ 
Clearly $(M_n)_{n\geq2}$ is a centered martingale, with respect to the filtration $(\rF_n)_{n\geq 2}$, where $\rF_n=\s\{(\r_k,X_k)\,;\ k\leq n\}$. 
Indeed, 
$$\EE\left[\D M_n\,\vert\, \rF_{n-1}\right]=\EE\left[f(\r_n,\D X_{n})\,\vert\, (\r_{n-1},\D X_{n-1})\right]- Pf(\r_{n-1},\D X_{n-1})=0$$
from the definition of $P$.\\*
We put
$$
R_n=X_0\cdot l+f(\r_1,\D X_1)-Pf(\r_n,\D X_n)\,.
$$
We claim that $(\ab{R_n})_{n\in\NN^*}$ is bounded. Indeed, by Equations (\ref{E:poisson}) and (\ref{E:sol_Poisson}) we have
$$
Pf(\r_n,\D X_n)=\tr(\r_n\, L_l)+m\cdot l
$$
and $\ab{\tr(\r_n\, L_l)}$ is bounded independently of $n$ by 
$$
\norme{\r_n}_1\,\norme{L_l}_\infty=\norme{L_l}_\infty\,.
$$
This means that the term $R_n$ has no contribution to  the law of large number or to the central limit theorem. 
It is thus sufficient to obtain a  law of large number and a central limit theorem for the martingale $(M_n)_{n\in\NN^*}$. We recall the form of the Central Limit Theorem for martingales that we shall use here.

\begin{theorem}[cf \cite{H-H}, Theorem 3.2 and Corollary 3.1]
Let $(M_n)_{n\in\NN}$ be a centered, square integrable, real martingale for the filtration $(\rF_n)_{n\in\NN}$. If, for all $\e>0$, we have the following convergences in probability:
\begin{equation}\label{E:cond1}
\lim_{n\rightarrow+\infty} \frac1n\sum_{k=1}^n \EE\left[(\D M_k)^2\, \indic_{\ab{\D M_k}\geq \e\sqrt n}\,\vert\, \rF_{k-1}\right]=0
\end{equation}
and
\begin{equation}\label{E:cond2}
\lim_{n\rightarrow+\infty} \frac1n\sum_{k=1}^n \EE\left[(\D M_k)^2\,\vert\, \rF_{k-1}\right]=\s^2
\end{equation}
for some $\s\geq0$, then $M_n/\sqrt n$ converges in distribution to a $\rN(0,\s^2)$ distribution.
\end{theorem}

\smallskip
As a third step of our proof we shall prove that $(M_n)_{n\geq2}$ satisfies the property (\ref{E:cond1}). We have 
\begin{align*}
\D M_k&=f(\r_k,\D X_k)-Pf(\r_{k-1},\D X_{k-1})\\
&=\tr(\r_k\,L_l)+\D X_k\cdot l-m\cdot l-\tr(\r_{k-1}\cdot L_l)\,.
\end{align*}
In particular $\D M_k$ is bounded independently of $k$ for
\begin{align}
\nonumber
\ab{\D M_k}&\leq \norme{\r_k}_1\,\norme{L_l}_\infty+\norme{\D X_k}\, \norme l+\norme m\, \norme l+\norme{\r_{k-1}}_1\, \norme{L_l}_\infty\\
\label{bounded-increments}
&\leq 2\norme{L_l}_\infty+\norme l+\norme m\, \norme l\,.
\end{align}
 Concerning the law of large number, since $M_n$ has bounded increments it implies that $M_n/n\to 0$ a.s. by Azuma's inequality
and Borel Cantelli lemma. This implies the law of large numbers for $(X_n)$ since $\vert R_n\vert$ is bounded.

 Remark now that the condition (\ref{E:cond1}) is then obviously satisfied as $\indic_{\ab{\D M_k}\geq \e\sqrt n}$ vanishes for $n$ large enough.

\smallskip
The fourth step of the proof consists in computing the quantity
$$
\EE\left[(\D M_k)^2\,\vert\, \rF_{k-1}\right]\,,
$$
in order to verify that Condition (\ref{E:cond2}) is satisfied. We have
$$
\D M_k=\tr(\r_k\,L_l)-\tr(\r_{k-1}\,L_l)+(\D X_k-m)\cdot l
$$
so that
\begin{align*}
(\D M_k)^2&=\tr(\r_k\,L_l)^2-\tr(\r_{k-1}\, L_l)^2\\
&\qquad-2\tr(\r_{k-1}\, L_l)\left[\tr(\r_k\,L_l)-\tr(\r_{k-1}\,L_l)+(\D X_k-m)\cdot l\right]\\
&\qquad+(\D X_k\cdot l-m\cdot l)^2+2\tr(\r_k\, L_l)\,(\D X_k\cdot l-m\cdot l)\,.
\end{align*}
We denote by $T_1$, $T_2$ and $T_3$, respectively, the three lines appearing in the right hand side above. The term $\EE[T_1\,\vert\, \rF_{k-1}]$ is equal to
$$
\EE[\tr(\r_k\,L_l)^2\,\vert\,\rF_{k-1}]-\tr(\r_k\,L_l)^2+\tr(\r_k\,L_l)^2-\tr(\r_{k-1}\, L_l)^2\,.
$$
The term $\EE[\tr(\r_k\,L_l)^2\,\vert\,\rF_{k-1}]-\tr(\r_k\,L_l)^2$ is the increment of a martingale $(Y_n)$ and it is bounded independently of $k$ (using the same kind of estimates as for $\ab{R_n}$ above). Hence $Y_n/n$ converges almost surely to 0. 

The term $\tr(\r_k\,L_l)^2-\tr(\r_{k-1}\, L_l)^2$, when summed up to $n$ gives $\tr(\r_n\,L_l)^2-\tr(\r_1\,L_l)^2$ and hence converges to 0 when divided by $n$. 

\smallskip
The term $\EE[T_2\,\vert\,\rF_{k-1}]$ clearly vanishes for it makes appearing the conditional expectation of the increment of the martingale $(M_n)$. 

\smallskip
We finally compute $\EE[T_3\,\vert\,\rF_{k-1}]$. We get
\begin{multline*}
\EE[T_3\,\vert\,\rF_{k-1}]=\EE\left[\left(\D X_k\cdot l\right)^2-2(m\cdot l)(\D X_k\cdot l)+(m\cdot l)^2+\right.\hfill\\
\hfill\left.+2\tr(\r_k\,L_l)\left(\D X_k\cdot l-m\cdot l\right)\,\vert\, \rF_{k-1}\right]\\
\hphantom{\EE[T_3\,\vert\,\rF_{k-1}]\ \ }=\sum_{i=1}^{2d}\tr\left(A_i\, \r_{k-1}\,A_i^*\right)\,\left[(e_i\cdot l)^2-2(m\cdot l)(e_i\cdot l)\right]+\hfill\\
\hfill+2\sum_{i=1}^{2d} \tr\left(A_i\,\r_{k-1}\, A_i^*\, L_l\right)\,(e_i\cdot l-m\cdot l)+(m\cdot l)^2\\
\hphantom{\EE[T_3\,\vert\,\rF_{k-1}]\ \ }=\tr\left(\r_{k-1}\,\left(\sum_{i=1}^{2d} A_i^*A_i\left(e_i\cdot l-m\cdot l\right)^2+\right.\right.\hfill\\
\hfill\left.\left.\vphantom{\sum_{i=1}^{2d}}+2 A_i^*\, L_l\, A_i\, (e_i\cdot l-m\cdot l)\right)\right)\,.
\end{multline*}
We put
$$
\Gamma_l=\sum_{i=1}^{2d} A_i^*A_i\left(e_i\cdot l-m\cdot l\right)^2+2 A_i^*\, L_l\, A_i\, (e_i\cdot l-m\cdot l)\,.
$$

\smallskip
Putting everything together, by the fact that $Y_n/n$ converges to 0 and by the Ergodic Theorem \ref{T:ergodic}, we get that 
$$
\frac 1n \sum_{k=3}^n\EE\left[\left(\D M_k\right)^2\,\vert\,\rF_{k-1}\right]
$$
converges almost surely to 
$$
\s_l^2=\tr(\r_\infty\, \Gamma_l)\,.
$$

\smallskip
The fifth and last step of the proof consists in rewriting the variance $\s_l^2$ in order to make the covariance matrix $C$ appearing. We have
\begin{align*}
\Gamma_l&=\sum_{i=1}^{2d} A_i^*A_i(e_i\cdot l-m\cdot l)^2+2\sum_{i=1}^{2d} A_i^*\,L_l\,A_i(e_i\cdot l-m\cdot l)\\
&=\sum_{i=1}^{2d} A_i^*A_i(e_i\cdot l)^2-2(m\cdot l)\sum_{i=1}^{2d} A_i^*A_i (e_i\cdot l)+(m\cdot l)^2+\\
&\qquad+2\sum_{i=1}^{2d} A_i^*\, L_l\,A_i(e_i\cdot l)-2(m\cdot l)\rL^*(L_l)\,.
\end{align*} 
Hence, this gives
\begin{align*}
\tr(\r_\infty\,\Gamma_l)&=\sum_{i=1}^{2d} \tr(A_i\,\r_\infty\, A_i^*)(e_i\cdot l)^2-2(m\cdot l)^2+(m\cdot l)^2+\\
&\qquad+2\sum_{i=1}^{2d} \tr(A_i\,\r_\infty\,A_i^*\, L_l)(e_i\cdot l)-2(m\cdot l)\tr(\rL(\r_\infty)\, L_l)\\
&=-(m\cdot l)^2+\sum_{i=1}^{2d} \tr(A_i\,\r_\infty\, A_i^*)(e_i\cdot l)^2+2\sum_{i=1}^{2d} \tr(A_i\,\r_\infty\, A_i^*\, L_l)(e_i\cdot l)\\
&\qquad-2(m\cdot l)\tr(\r_\infty\, L_l)\,.
\end{align*}
This gives
\begin{align*}
\s_l^2&=-\sum_{i,j=1}^{d} m_im_jl_il_j+\sum_{i=1}^{d} l_i^2\left(\tr(A_i\,\r_\infty\,A_i^*)+\tr(A_{i+d}\, \r_\infty\, A_{i+d}^*)\right)+\\
&\qquad+ 2\sum_{i,j=1}^d l_il_j\left(\tr(A_i\,\r_\infty\, A_i^*\, L_j)-\tr(A_{i+d}\,\r_\infty\,A_{i+d}^*\, L_j)\right)\\
&\qquad-2\sum_{i,j=1}^d l_il_jm_i\tr(\r_\infty L_j)\,.
\end{align*}
This proves that 
$$
\s_l^2=\sum_{i,j=1}^d l_il_j\, C_{ij}\,,
$$
where the matrix $\s$ is the one given in the theorem statement. The central limit theorem is proved.

\end{proof}

\smallskip
Note that here appears a key point in our proof: all the quadratic terms in $\r_k$ disappear in the limit; this is crucial for otherwise it would have been impossible to handle them without information on the invariant measure of the Markov chain $(\r_n)$.

\subsection{The one dimensional case}
The one dimensional case is a useful one, we make simpler in this case the formulas we have obtained above. 

In the case where the dimension is $d=1$, there are only two jump operators $A_1$ and $A_2$, which satisfy
$$
A_1^*A_1+A_2^*A_2=I\,.
$$
We have
$$
m=\tr(A_1\,\r_\infty\, A_1^*)-\tr(A_2\,\r_\infty\, A_2^*)\,.
$$
In dimension 1 there is only one operator $L_i$, the operator $L_1$, which we denote here by $L$ simply and which is solution of
$$
L-\rL^*(L)=A_1^*A_1-A_2^*A_2-mI=2A_1^*A_1-(1+m)I\,.
$$
Finally, following the theorem above, we have
$$m=1-2\tr(A_2\,\r_\infty\, A_2^*)\,$$
and
\begin{align*}
\s^2&=\Tr\left(A_1\r_\infty A_1^*+A_2\r_\infty A_2^*\right)-m^2+\\
&\ \ \ +2\Tr\left[\left(A_1\r_\infty A_1^*-A_2\r_\infty A_2^*\right)L\right]-2m\Tr(\r_\infty L)\\
&=1-m^2-2m\Tr(\r_\infty L)+2\Tr\left[\left(A_1\r_\infty A_1^*-A_2\r_\infty A_2^*\right)L\right]\\
&=1-m^2-2m\Tr(\r_\infty L)+2\Tr(\r_\infty L)-4\Tr\left[\left(A_2\r_\infty A_2^*\right)L\right]\\
&=1-m^2+2(1-m)\Tr(\r_\infty L)-4\Tr\left[\r_\infty A_2^*LA_2\right]\\
&=1-m^2+4\left(\Tr\left(A_2\r_\infty A_2^*\right)\Tr(\r_\infty L)-\Tr\left(\r_\infty A_2^*LA_2\right)\right)\,,
\end{align*}
or equivalently
$$
\s^2=1-m^2+4\left(\Tr\left(\r_\infty A_1^*LA_1\right)-\tr\left(A_1\r_\infty A_1^*\right)\Tr(\r_\infty L)\right)\,.
$$

\subsection{Examples}

We shall now explore several examples in order to illustrate our Central Limit Theorem. Let us first start with two examples on $\ZZ$. The example 
$$
B=\frac{1}{\sqrt3}\,\left(\begin{matrix}1&1\\0&1\end{matrix}\right)
\qq\mbox{and}\qq
C=\frac{1}{\sqrt3}\,\left(\begin{matrix}1&0\\-1&1\end{matrix}\right)
$$
that we mentioned earlier falls in the scope of our theorem for it admits a unique invariant state
$$
\r_\infty=\frac12 I\,.
$$
In particular we have 
$$
m=\tr\left(C\,\r_\infty\, C^*\right)-\tr\left(B\,\r_\infty\,B^*\right)=0\,.
$$
We recover here that the limit Gaussian distribution is centered, as was observed in the simulations above.

The operator $L$, given by Lemma \ref{existe_L} is
$$
L=
\frac 13\,\left(
\begin{matrix}
 -5 & 1    \\
 1 & 0   
\end{matrix}
\right)+\l I\,.
$$
This gives 
$$
\s^2=\frac{8}9\,.
$$

\bigskip
Let us compute the case of our trivial example on $\ZZ$  obtained by taking
$$
B=\left(\begin{matrix}0&\sqrt p\\0&0\end{matrix}\right)\qq\mbox{and}\qq C=\left(\begin{matrix} 1&0\\ 0&\sqrt{1-p}\end{matrix}\right)\,.
$$
In that case the unique invariant state is 
$$
\r_\infty=\left(
\begin{matrix}
 1 & 0    \\
 0 & 0   
\end{matrix}
\right)\,.
$$
We find $m=1$ in that case, which is compatible with the behavior we described for this example. 

The operator $L$ in this case is
$$
L=\left(\begin{matrix}
 -2 & 0    \\
 0 & 0   
\end{matrix}
\right)+\l I\,.
$$
This gives $\s^2=0$. We recover that the asymptotic behavior of this open quantum random walk is degenerate, with drift +1.

\bigskip
Let us end up this illustration with the 2-dimensional example mentioned in Subsection \ref{SS:examples}:
$$
N=\frac12\,\left(\begin{matrix}1&1\\0&1\end{matrix}\right)\,,
\ \  S=\frac12\,\left(\begin{matrix}1&0\\-1&1\end{matrix}\right)\,,\ \ 
W=\frac18\,\left(\begin{matrix}0&1\\0&1\end{matrix}\right)\,,
\ \ 
E=\frac14\,\left(\begin{matrix}1&0\\0&\sqrt{\frac72}\end{matrix}\right)\,.
$$
We find a unique invariant state
$$
\r_\infty=\frac1{33}\,\left(\begin{matrix}17&0\\0&16\end{matrix}\right)\,.
$$
The average is 
$$
m=\left(\frac{29}{132},\frac{-1}{132}\right)\,.
$$
The two solutions of Equation (\ref{E:eq_L}) are then
$$
L_1=\left(\begin{matrix}0&\frac{68 \left(16+\sqrt{14}\right)}{3993}\\ \frac{68 \left(16+\sqrt{14}\right)}{3993}&\frac{8 \left(756+17 \sqrt{14}\right)}{3993}\end{matrix}\right)\,,\ \ \ 
L_2=\left(\begin{matrix}0&\frac{8 \left(16+\sqrt{14}\right)}{3993}\\\frac{8 \left(16+\sqrt{14}\right)}{3993}&\frac{4 \left(-57+4 \sqrt{14}\right)}{3993}\end{matrix}\right)\,.
$$
and we find the following covariance matrix
$$
C=\left(\begin{matrix}0.675&0.008\\0.008&0.211\end{matrix}\right)\,,
$$
approximately\,.

\section{Application to Quantum Measurement \allowbreak Re\-cords}

In this section we leave for a while the setup of Open Quantum random Walks in order to show that our Central Limit Theorem actually applies to a wider situation: the recording of successive measurements in quantum trajectories. 

\subsection{Quantum Trajectory Setup}

The setup we shall present here is the one of recording quantum trajectories in discrete time, we actually speak of repeated measurements. This setup of \emph{Repeated Quantum Measurements}, based on the \emph{Repeated Quantum Interaction} scheme developed in \cite{A-P},  has been introduced and studied mathematically in \cite{Pel1} and \cite{Pel2}; it corresponds to actual important physical experiments such as the ones performed by S. Haroche's team on the indirect observation of photons in a cavity (\cite{Har1}, \cite{Har2}). 

Very quickly resumed, the setup is the following. A quantum system $\rH_S$ is performing an interaction with a quantum environment which has the form of a
chain of identical copies of a quantum system $\rK$, that is,
$$
\rH_E=\bigotimes_{n\in\NN^*} \rK\,.
$$
The dynamics in between $\rH_S$ and $\rH_E$ is obtained as follows. The
small system $\rH_S$ interacts with the first copy $\rK$ of the chain
during an interval $[0,h]$ of time and following some Hamiltonian $H_{\rm{tot}}$ on
$\rH_S\otimes\rK$. That is, the two systems evolve together following
the unitary operator 
$$
U=e^{-ihH_{\rm{tot}}}\,.
$$
After this first interaction, the small system $\rH_S$ stops
interacting with the first copy and starts an interaction with the
second copy which was left unchanged until then. This second
interaction follows the same unitary operator $U$. And so on, the
small system $\rH_0$ interacts repeatedly with the elements of the
chain one after the other, following the same unitary evolution $U$. 

\smallskip
We are given an orthonormal basis $\{e_1,\ldots,e_n\}$ of $\rK$. 
Assume that the initial state in $\rK$ before the interaction is $\vert e_1\rangle\langle e_1\vert$, and the initial state of $\rH_S$ is $\r$. The  whole state after interaction is
$$
U\left(\r\otimes\vert e_1\rangle\langle e_1\vert\right)U^*\,.
$$
The quantum channel on $\rH$ associated to that evolution is then given by
$$
\rL(\r)=\tri_\rK\left(U\left(\r\otimes\vert e_1\rangle\langle e_1\vert\right)U^*\right)=\sum_{i=1}^n M_i\,\r\,M_i^*\,,
$$
where the $M_i$'s are given by $M_i=U^1_i$, the coefficients of the first column of $U$ seen as a block matrix in the basis $\{e_1,\ldots,e_n\}$ of $\rK$.

In particular notice that this setup is as general as possible, for any given quantum channel $
\rL(\r)=\sum_{i=1}^n M_i\,\r\,M_i^*$ on $\rH$ could be obtained this way, by choosing a unitary $U$ with prescribed first column.

\smallskip
Now performing a measurement of any observable $X$ of $\rK$ which is diagonal with respect to the basis $\{e_1,\ldots,e_n\}$ gives rise to $n$ different possible values, obtained with respective probability
$$
p_i=\tr\left(M_i\,\r\,M_i^*\right)\,.
$$
The state of the whole system after the corresponding measurement is then
$$
\frac{1}{p_i}\, M_i\,\r\,M_i^*\otimes \vert e_i\rangle\langle e_i\vert\,.
$$
Regarding only the system $\rH_S$, the resulting state is $\r_1=\frac{1}{p_i}\, M_i\,\r\,M_i^*$.

\smallskip
Now repeating the procedure, via the repeated interaction scheme, we see that we obtain a Markov chain $(\r_n, X_n)_{n\in\NN}$, where $\r_n$ evolves in the set of density matrices of $\rH_S$ and $X_n$ evolves in the set $\{e_1,\ldots,e_n\}$. The law of the Markov chain is described as follows: if the chain at time $n$ is at $(\r,X)$, then at time $n+1$ it jumps to one of the values
$$
\left(\frac{1}{p_i}\, M_i\,\r\,M_i^*, e_i\right)\,,
$$
$i=1,....,n$,
with respective probability 
$$
p_i=\tr\left(M_i\,\r\,M_i^*\right)\,.
$$
This is the so-called \emph{quantum trajectory} associated to the quantum channel $\rL$, as obtained by \emph{repeated interaction and repeated measurement scheme}. 

We are interested in the recording of the different random choices for this successive measurements. That is, we are looking at the sequence of values $(X_n)_{n\in\NN}$ and we wish to write a Central Limit Theorem for the associated random walk $S_n=\sum_{i=1}^n X_i$.

\subsection{T.C.L. for Quantum Measurement Recordings}

Comparing this setup to the one we have developed for the quantum trajectories associated to Open Quantum Random Walks shows that it is exactly the same as in Theorem \ref{T:quantum_traj}, for $d=n$ and
$$
A_1=M_1,\ldots, A_n=M_n, \qq\mbox{and\ \ \ }A_{n+1}=\ldots=A_{2n}=0\,.
$$
In order to apply our previous result, we make the same important assumption here:

\bigskip\noindent
\ \ (H1)\ \ :\ \  \emph{$\rL$ admits a unique invariant state $\r_\infty$.}

\bigskip\noindent

We put
$$
\boxed{\ \ m=\sum_{i=1}^{n}\tr(M_i\,\r_\infty\, M_i^*)\, e_i\in\RR^n\,.\ \ }
$$
The Lemma \ref{existe_L} applies.

\begin{lemma}\label{existe_L2} 
For every $l\in\RR^n$, the equation 
\begin{equation}\label{E:eq_L}
\boxed{\ \ \left(L-\rL^*(L)\right)=\sum_{i=1}^{n} M_i^*M_i\, (e_i\cdot l)-(m\cdot l)\, I\ \ }
\end{equation}
admits a solution. The difference between any two solutions of \eqref{E:eq_L} is a multiple of the identity.
\end{lemma}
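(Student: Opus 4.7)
The plan is to simply repeat the argument of Lemma \ref{existe_L} verbatim, after observing that the quantum measurement recording setup is a special case of the OQRW framework via the identification $A_i = M_i$ for $i=1,\ldots,n$ and $A_{n+1} = \cdots = A_{2n} = 0$, with $d$ replaced by $n$. Since the proof of Lemma \ref{existe_L} only used the definition of $m$, the hypothesis (H1), and basic duality properties of $\rL$ and $\rL^*$, none of which depend on whether some of the $A_i$ vanish, the argument goes through unchanged.

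More concretely, I would first remark that by the definition of $m$, for any $l \in \RR^n$ we have
$$
\tr\left(\r_\infty\left(\sum_{i=1}^{n} M_i^*M_i\,(e_i\cdot l) - (m\cdot l)\,I\right)\right) = \sum_{i=1}^n \tr(M_i\,\r_\infty\,M_i^*)(e_i\cdot l) - (m\cdot l) = 0,
$$
so that the right-hand side of \eqref{E:eq_L} belongs to $\{\r_\infty\}^\perp$ with respect to the duality $\langle \r, X\rangle = \tr(\r X)$ between trace-class and bounded operators.

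Next, I would invoke hypothesis (H1) to identify $\{\r_\infty\}^\perp$ as the range of $I - \rL^*$. Indeed, under (H1) the kernel of $I - \rL$ is one-dimensional and spanned by $\r_\infty$, hence $\{\r_\infty\}^\perp = \ker(I-\rL)^\perp$, and on the finite-dimensional operator space this latter subspace coincides with the range of the adjoint $I - \rL^*$. This yields the existence of $L$ solving \eqref{E:eq_L}.

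For uniqueness up to a multiple of the identity, I would argue that if $L$ and $L'$ are two solutions then $H = L - L'$ satisfies $H = \rL^*(H)$, so $H$ lies in the fixed-point space of $\rL^*$. Since the fixed-point space of $\rL$ is one-dimensional by (H1), so is that of $\rL^*$, and because $\rL^*(I) = \sum_i M_i^*M_i = I$, this one-dimensional fixed-point space of $\rL^*$ is necessarily $\CC\,I$, giving $H \in \CC\,I$. The main (and only) subtle point is checking $\rL^*(I) = I$, which here follows directly from the defining relation $\sum_{i=1}^n M_i^*M_i = I$ of the Kraus operators coming from the first column of the unitary $U$; there is no genuine obstacle.
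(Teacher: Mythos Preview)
Your proposal is correct and matches the paper's approach exactly: the paper simply says ``The Lemma \ref{existe_L} applies'' before stating Lemma \ref{existe_L2}, invoking the identification $A_i=M_i$ for $i=1,\ldots,n$ and $A_{n+1}=\cdots=A_{2n}=0$ that was set up a few lines earlier. You have spelled out the details of that reduction, which is more than the paper itself does, but the underlying argument is identical.
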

\smallskip
As in our main theorem, we denote by $L_l$ a solution of (\ref{E:eq_L}) associated to $l\in\RR^d$. In the case where $l=e_i$, for $i=1,\ldots, d$,  we denote $L_l$ by $L_i$ simply. 

\smallskip
The Central Limit Theorem for measurement records now reads as follows, as a direct application of 
Theorem \ref{T:main2}.

\begin{theorem}\label{T:main2}
Consider the quantum channel 
$$
\rL(\r)=\sum_{i=1}^{n} M_i\,\r\,M_i^*\,,
$$
on $\rH$, which we assume to
admit a unique invariant state $\r_\infty$. Consider the quantum random walk $(S_n)_{n\in\NN}$ on $\ZZ^n$ associated to the successive measurements associated to the quantum trajectory of $\rL$. Let $m$ and the $L_i$'s be given as described above. Then  $\lim_{n\to \infty} S_n/n=m$ a.s. and
$$
\frac{S_n-n\,m}{\sqrt n}
$$
converges in law to the Gaussian distribution $\rN(0,C)$ in $\RR^n$, with covariance matrix
$$
\fbox{\begin{minipage}{0.5\textwidth}
\begin{align*}
C_{ij}&=\delta_{ij}\,\tr(M_i\,\r_\infty\, M_i^*)-m_im_j+\\
&\ \ \ +\left(\tr(M_i\,\r_\infty\, M_i^*\, L_j)+\tr(M_j\,\r_\infty\, M_j^*\, L_i)\right)\\
&\ \ \ -\left(m_i\tr(\r_\infty\, L_j)+m_j\tr(\r_\infty\, L_i)\right)\,.
\end{align*}
\end{minipage}}
$$
\end{theorem}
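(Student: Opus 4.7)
The plan is to reduce Theorem \ref{T:main2} to a direct corollary of Theorem \ref{T:main} by identifying the measurement record setup with a degenerate Open Quantum Random Walk on $\ZZ^n$. Concretely, take $d = n$ and set $A_i = M_i$ for $i = 1,\ldots,n$ and $A_{n+i} = 0$ for $i = 1,\ldots, n$. Then the normalization $\sum_{i=1}^{2d} A_i^* A_i = \sum_{i=1}^n M_i^* M_i = I$ is automatic since $\rL$ is a quantum channel, and the associated OQRW is well-defined. The associated completely positive map on $\rH$ is exactly $\rL$, so Hypothesis (H1) of the main theorem holds by assumption.

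Next I would verify that the two Markov chains coincide. In the measurement-record setting, the chain $(\rho_n, X_n)$ jumps from $(\rho, x)$ to $(\frac{1}{p_i} M_i \rho M_i^*, e_i)$ with probability $p_i = \tr(M_i \rho M_i^*)$, and $S_n = \sum_{k=1}^n X_k$. In the OQRW setting of Theorem \ref{T:main}, the chain $(\rho_n, X_n)$ has increments $\Delta X_n$ taking values in $\{e_1,\ldots,e_{2d}\}$; with our identification the $2d$-th through $(d{+}1)$-th increments occur with probability $0$, so the increments are effectively in $\{e_1,\ldots,e_n\}$ and are distributed exactly as the $X_k$ of the measurement record. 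Hence $S_n$ of the measurement record coincides in law with $X_n$ of the OQRW, and the ergodic Lemma \ref{existe_L} becomes Lemma \ref{existe_L2} verbatim.

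Applying Theorem \ref{T:main}, we immediately obtain $S_n/n \to m$ almost surely and $(S_n - nm)/\sqrt{n} \to \rN(0,C)$ in distribution. It only remains to specialize the formulas. The drift $m = \sum_{i=1}^{2d} \tr(A_i \rho_\infty A_i^*) e_i$ collapses to $\sum_{i=1}^n \tr(M_i \rho_\infty M_i^*) e_i$ since the $A_{i+n}$ terms vanish. Similarly, in the covariance formula of Theorem \ref{T:main}, every term involving $A_{i+d} = A_{i+n}$ or $A_{j+d} = A_{j+n}$ drops out, and the expression reduces precisely to
\begin{align*}
C_{ij} &= \delta_{ij}\,\tr(M_i\,\rho_\infty\, M_i^*)-m_im_j\\
&\ \ \ +\left(\tr(M_i\,\rho_\infty\, M_i^*\, L_j)+\tr(M_j\,\rho_\infty\, M_j^*\, L_i)\right)\\
&\ \ \ -\left(m_i\tr(\rho_\infty\, L_j)+m_j\tr(\rho_\infty\, L_i)\right),
\end{align*}
which is the stated covariance. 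There is no genuine obstacle here: the only point demanding a sentence of care is checking that Theorem \ref{T:main} tolerates having $A_{n+i} = 0$, i.e.\ that the proof of the main theorem nowhere uses positivity or non-vanishing of the individual jump operators beyond $\sum A_i^* A_i = I$; inspection shows it does not, since the Poisson equation, the martingale difference bound \eqref{bounded-increments}, and the ergodic step all only use the sum relation and the uniqueness of $\rho_\infty$.
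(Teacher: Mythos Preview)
Your proposal is correct and follows essentially the same approach as the paper: the paper also sets $d=n$, $A_i=M_i$ for $i=1,\ldots,n$ and $A_{n+1}=\ldots=A_{2n}=0$, and then invokes Theorem~\ref{T:main} as a direct application, with the covariance formula simplifying exactly as you describe. Your extra remark that the proof of Theorem~\ref{T:main} nowhere requires the individual $A_i$ to be nonzero is a useful sanity check that the paper leaves implicit.
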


\subsection{Examples}

One of the simplest interesting example of a quantum trajectory simulating some quantum channel is the one associated to \emph{spontaneous emission}. In that model, the systems $\rH_S$ and $\rK$ are both two-level systems, that is, $\CC^2$. The Hamiltonian, in the simplest configuration, is 
$$
H_{\rm tot}=i\left(\begin{matrix} 0&1\\0&0\end{matrix}\right)\otimes\left(\begin{matrix} 0&0\\1&0\end{matrix}\right)-i\left(\begin{matrix} 0&0\\1&0\end{matrix}\right)
\otimes\left(\begin{matrix} 0&1\\0&0\end{matrix}\right)\,.
$$
The associated unitary evolution is 
$$
U=e^{-i h H_{\rm tot}}=\left(\begin{matrix}1&0&0&0\\0&\cos(h)&-\sin(h)&0\\0&\sin(h)&\cos(h)&0\\0&0&0&1\end{matrix}\right)\,.
$$
The quantum channel is 
$$
\rL(\r)=M_1\,\r\,M_1^*+M_2\,\r\,M_2^*
$$
with
$$
M_1=\left(\begin{matrix}1&0\\0&\cos(h)\end{matrix}\right)\,,\qq M_2=\left(\begin{matrix}0&\sin(h)\\0&0\end{matrix}\right)\,.
$$
In what follows we assume that $\cos^2(h)\not=1$, for otherwise the dynamics is completely trivial. 

This quantum channel has a unique invariant state 
$$
\r_\infty=\left(\begin{matrix}1&0\\0&0\end{matrix}\right)\,.
$$
The quantum trajectories are easy to describe. Given a state 
$$
\r=\left(\begin{matrix} \a&z\\\ol{z}&\b\end{matrix}\right)
$$ 
and a position $X$, the next measurement leads to the state
$$
\frac{1}{\a+\cos^2(h)\b} \left(\begin{matrix} \a&\cos(h)z\\\cos(h)\ol{z}&\cos^2(h)\,\b\end{matrix}\right)
$$
and the position $X+e_1$, with probability $\a+\cos^2(h)\b$, or to the state
$$
\left(\begin{matrix} 1&0\\0&0\end{matrix}\right)
$$
and the position $X+e_2$, with probability $\sin^2(h)\b$. 

In the case it reaches the second value above, the quantum trajectory will not change anymore, that is, the state remains 
$$
\left(\begin{matrix} 1&0\\0&0\end{matrix}\right)
$$
with probability 1, giving always a step along $e_1$ for the random walk.

Computing $m$ as described above gives 
$$
m=e_1\,,\qq L_1= \left(\begin{matrix} 1&0\\0&0\end{matrix}\right)\,,\qq L_2=\left(\begin{matrix} -1&0\\0&0\end{matrix}\right)\,.
$$
In the Central Limit Theorem, the covariance matrix is then the null one. Which is what could be expected, regarding the description we gave for the quantum trajectories of this quantum channel.

\bigskip
One can also compute the Central Limit Theorem with a less trivial example. Consider the quantum channel that we have already met
$$
\rL(\r)=B\,\r\,B^*+C\,\r\,C^*\,,
$$
with
$$
B=\frac{1}{\sqrt3}\,\left(\begin{matrix}1&1\\0&1\end{matrix}\right)
\qq\mbox{and}\qq
C=\frac{1}{\sqrt3}\,\left(\begin{matrix}1&0\\-1&1\end{matrix}\right)\,.
$$
We find
$$
m=\frac{1}{2}(e_1+e_2),\qq L_1=\frac{1}{6}\,\left(\begin{matrix}-5&1\\1&0\end{matrix}\right)\,,\qq L_2=
\frac{1}{6}\,\left(\begin{matrix}5&-1\\-1&0\end{matrix}\right)\,.
$$
This gives the covariance matrix
$$
C=\frac{2}{9}\,\left(\begin{matrix}1&-1\\-1&1\end{matrix}\right)\,.
$$

\section{The Block-Diagonal Case}\label{SS:block}

\subsection{The Main Theorem}
The Central Limit Theorem proved above does not concern the case where $\rL$ admits several invariant states. This is typically the case when the asymptotic behavior shows up several Gaussian contributions. The proof we have obtained above does not adapt to the general case. However, there is one situation, with several Gaussians which we are able to treat. Let us describe it now.

\smallskip
Consider the operators $A_1,\ldots,A_{2d}$ satisfying 
$$
\sum_{i=1}^{2d} A_i^*A_i=I\,,
$$
as previously. We now assume that there exists a decomposition
$$
\rH=E_1\oplus E_2\oplus\ldots\oplus E_N
$$
of $\rH$ into orthogonal subspaces such that all the $A_i$'s are block-diagonal with respect to this decomposition. That is, 
$$
A_i(E_j)\subset E_j
$$
for all $i=1,\ldots, 2d$, all $j=1,\ldots, N$. This hypothesis is denoted by (H1') in the rest of this section. We denote by $P_j$ the orthogonal projector onto $E_j$. Note that the condition above is equivalent to 
$$
P_jA_i=A_iP_j
$$
for all $i=1,\ldots,2d$, all $j=1,\ldots, N$.  We put
$$
A_i^{(j)}=A_iP_j=P_jA_iP_j\,.
$$
In the same way we denote by $\rL^{(j)}$ the completely positive map associated to the operators $(A_i^{(j)})_{i=1}^{2d}$. On each subspace $E_j$ we have
$$
\sum_{i=1}^{2d} {A^{(j)}_i}^*A^{(j)}_i=\sum_{i=1}^{2d} P_jA_i^*A_iP_j=P_j=I_{E_j}\,.
$$
If $\r$ is a density matrix on $\rH$ we put
$$
\r^{(j)}=P_j\r P_j\,.
$$

\smallskip
Let $\r$ be a density matrix and $\PP_\r$ the law of the Markov chain $(\r_n, X_n)_{n\geq 0}$ obtained as previously, by the quantum trajectories associated to the matrices $A_i$, starting with the initial state $\r$. Recall that
$$
\left(\r_{n+1}=\frac{A_i\r_nA_i^*}{\Tr(A_i\r_nA_i^*)}\,,\  X_{n+1}=X_n+e_i\right)
$$
with probability $\Tr(A_i\r_nA_i^*)$.

We put $p^{(j)}_n=\Tr(P_j\r_n)$. 

\begin{lemma}\label{lemma1}
The process $(p^{(j)}_n)_{n\geq 0}$ is a martingale for the filtration 
$$
\rF_n=\s\left((\r_k,X_k)\,,\ k\leq n\right)\,.
$$
\end{lemma}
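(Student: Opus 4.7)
The plan is to verify the martingale property by a direct computation of $\EE[p_{n+1}^{(j)}\,|\,\rF_n]$, exploiting the commutation between $P_j$ and every $A_i$ that hypothesis (H1') provides. Since $(\r_n, X_n)$ is a Markov chain whose transitions are explicitly known (from Theorem \ref{T:quantum_traj}), the conditional expectation of any function of $(\r_{n+1}, X_{n+1})$ given $\rF_n$ is simply the finite sum over the $2d$ possible jumps, weighted by the jump probabilities $\tr(A_i \r_n A_i^*)$.

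The concrete computation I have in mind is:
\begin{align*}
\EE[p_{n+1}^{(j)}\,|\,\rF_n]
&=\sum_{i=1}^{2d}\tr(A_i\r_n A_i^*)\cdot \tr\!\left(P_j\,\frac{A_i\r_n A_i^*}{\tr(A_i\r_n A_i^*)}\right)\\
&=\sum_{i=1}^{2d}\tr(P_j A_i\r_n A_i^*),
\end{align*}
where one must take care that the terms with $\tr(A_i\r_n A_i^*)=0$ simply do not contribute (the state is not visited). Using $P_jA_i=A_iP_j$ and the cyclicity of the trace, each summand becomes $\tr(A_i^*A_i P_j\r_n)$, and summing over $i$ and invoking $\sum_i A_i^*A_i=I$ collapses the expression to $\tr(P_j\r_n)=p_n^{(j)}$. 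This yields the martingale identity.

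It remains to note $\rF_n$-measurability of $p_n^{(j)}$ (immediate from $p_n^{(j)}=\tr(P_j\r_n)$) and integrability, which is automatic since $0\leq p_n^{(j)}\leq 1$ uniformly in $n$ (it is a probability).

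There is essentially no obstacle here: the only thing to double-check is the harmless technicality that the conditional law of $\r_{n+1}$ given $\rF_n$ depends only on $\r_n$ (not on $X_n$), which is clear from the description of the Markov chain. The commutation $P_jA_i=A_iP_j$ provided by the block-diagonal hypothesis is what makes the trace against $P_j$ behave like a trace against $I$ after moving $P_j$ through $A_i$, and this is the single algebraic fact driving the whole argument.
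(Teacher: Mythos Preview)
Your proof is correct and follows essentially the same route as the paper: compute the conditional expectation as the finite sum over jumps, cancel the normalizing probabilities, and then use the commutation $P_jA_i=A_iP_j$ together with $\sum_i A_i^*A_i=I$ to recover $\tr(P_j\r_n)$. The only cosmetic difference is that the paper inserts a second $P_j$ (via $P_j=P_j^2$) to make $\r_n^{(j)}=P_j\r_nP_j$ appear explicitly, whereas you invoke cyclicity of the trace directly; both amount to the same one-line computation.
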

\begin{proof}
We have
\begin{align*}
\EE_\r\left[p_{n+1}^{(j)}\,\vert\, \rF_n\right]&=\sum_i\Tr\left(P_jA_i\r_nA_i^*\right)\\
&=\sum_i \Tr\left(P_jA_i\r_nA_i^*P_j\right)\\
&=\sum_i \Tr\left(A_iP_j\r_nP_jA_i^*\right)\\
&=\sum_i\Tr\left(A_i^*A_i\r_n^{(j)}\right)\\
&=\Tr(\r_n^{(j)})=p_n^{(j)}\,.
\end{align*}
\end{proof}

 Since $(p_n^{(j)})$ is non-negative and bounded, it converges a.s. and in $L^1$ to a limit that we denote
$$
p_\infty^{(j)}=\lim p_n^{(j)}.
$$
Remark that $\sum_{j=1}^N p_\infty^{(j)}=1$ since $\sum_{j=1}^N p_n^{(j)}=1$ for all $n$.
As $(p_n^{(j)})$ is a martingale we can consider the associated Girsanov transform (that is, the $h$-process). We define $\PP_\r^{(j)}$ to be the law on the trajectories which is given, on the length $n$ trajectories by
$$
\PP_n^{(j)}=\frac{p_n^{(j)}}{p_0^{(j)}}\, \PP_n
$$
where $\PP_n$ is the law on the trajectories with length $n$. In other words
$$
\PP_\r^{(j)}=\frac{p_\infty^{(j)}}{p_0^{(j)}}\, \PP_\r\,.
$$

\begin{proposition} \label{Restriction}
Under the law $\PP_\r^{(j)}$ the sequence 
$$
\left(\frac{\r_n^{(j)}}{\Tr\left(\r_n^{(j)}\right)}\,,\, X_n\right)_{n\geq 0}
$$
has the law of the quantum trajectories associated to the family of operators $(A_i^{(j)})_{i=1,\ldots, 2d}$ and starting from the state $\r_0^{(j)}$. 
\end{proposition}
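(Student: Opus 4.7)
The strategy is to verify directly that, under the reweighted measure $\PP_\rho^{(j)}$, the process $\bigl(\rho_n^{(j)}/\Tr(\rho_n^{(j)}),\,X_n\bigr)$ obeys exactly the one-step transition rule of Theorem \ref{T:quantum_traj} applied to the family $(A_i^{(j)})_{i=1,\ldots,2d}$. Since a time-homogeneous Markov chain is characterized by its initial law and its transition kernel, and since $\tilde\rho_0^{(j)}:=\rho_0^{(j)}/p_0^{(j)}$ is by construction the declared starting state, this comparison will conclude the proof.

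The first step is to compute the Radon--Nikodym reweighting on one step. Under $\PP_\rho$, from $(\rho_n,X_n)=(\rho,x)$ the chain jumps to $(A_i\rho A_i^*/\Tr(A_i\rho A_i^*),\,x+e_i)$ with probability $\Tr(A_i\rho A_i^*)$. Since the density of $\PP_\rho^{(j)}$ with respect to $\PP_\rho$ on $\rF_{n+1}$ is $p_{n+1}^{(j)}/p_0^{(j)}$, each transition probability is multiplied by the factor $p_{n+1}^{(j)}/p_n^{(j)}$. Using the block-diagonal hypothesis (H1') in the form $P_jA_i=A_iP_j$, and the fact that the ``off-diagonal'' piece $A_iP_j\rho(I-P_j)A_i^*$ has vanishing trace against $P_j$ (because $(I-P_j)A_i^*A_iP_j=0$), one finds, on the event that branch $i$ is chosen,
$$p_{n+1}^{(j)}=\frac{\Tr(P_jA_i\rho A_i^*)}{\Tr(A_i\rho A_i^*)}=\frac{\Tr(A_i^{(j)}\rho^{(j)}A_i^{(j)*})}{\Tr(A_i\rho A_i^*)}.$$
Multiplying by $\Tr(A_i\rho A_i^*)$ and dividing by $p_n^{(j)}$, the $\PP_\rho^{(j)}$-probability of branch $i$ becomes $\Tr\bigl(A_i^{(j)}\tilde\rho^{(j)}A_i^{(j)*}\bigr)$ with $\tilde\rho^{(j)}=\rho^{(j)}/p_n^{(j)}$, which is precisely the quantum-trajectory probability attached to $A_i^{(j)}$ applied to $\tilde\rho^{(j)}$.

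The second step is the corresponding update of the restricted normalized state. On branch $i$, commutation gives $\rho_{n+1}^{(j)}=P_jA_i\rho A_i^*P_j/\Tr(A_i\rho A_i^*)=A_i^{(j)}\rho^{(j)}A_i^{(j)*}/\Tr(A_i\rho A_i^*)$, and dividing by $p_{n+1}^{(j)}$ yields
$$\tilde\rho_{n+1}^{(j)}=\frac{A_i^{(j)}\tilde\rho^{(j)}A_i^{(j)*}}{\Tr(A_i^{(j)}\tilde\rho^{(j)}A_i^{(j)*})},$$
which is exactly the prescribed $(A_i^{(j)})$-quantum-trajectory update. The only subtlety, and the main point requiring care, is the set $\{p_n^{(j)}=0\}$ on which $\tilde\rho_n^{(j)}$ is not defined and the above densities are indeterminate; but by the very definition of the $h$-transform this set is $\PP_\rho^{(j)}$-null, so the computation above describes the dynamics $\PP_\rho^{(j)}$-almost surely, while the degenerate case $p_0^{(j)}=0$ renders the proposition vacuous.
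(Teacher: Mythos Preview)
Your proof is correct and follows essentially the same approach as the paper's: both identify the law under $\PP_\rho^{(j)}$ as the $h$-process (Girsanov transform) of the original chain by the martingale $p_n^{(j)}$, then compute the one-step transition probability $\dfrac{p_{n+1}^{(j)}}{p_n^{(j)}}\Tr(A_i\rho_nA_i^*)=\Tr\bigl(A_i^{(j)}\tilde\rho_n^{(j)}A_i^{(j)*}\bigr)$ using $P_jA_i=A_iP_j$, and verify the updated normalized block $\tilde\rho_{n+1}^{(j)}$ obeys the $(A_i^{(j)})$-trajectory rule. You are slightly more explicit than the paper in handling the $\{p_n^{(j)}=0\}$ null set and the normalization of the initial state, but the argument is the same.
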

\begin{proof}
The sequence $p_n^{(j)}=\Tr(P_j\r_n)$, $n\in\NN$,  is a function of $(\r_n)$. The chain $(\r_n, X_n)$ under $\PP^{(j)}$ is thus a $h$-process of the initial chain for the harmonic function $p^{(j)}(\r)=\Tr(P_j\r)$. We thus have that $(\r_n, X_n)$ is a Markov chain under $\PP^{(j)}$ with transition probabilities:
$$
\begin{cases}
\r_{n+1}=\frac{A_i\r_nA_i^*}{\tr\left(A_i\r_nA_i^*\right)} & \\
X_{n+1}=X_n+e_i& 
\end{cases}
$$
with probability 
$$
\frac{p_{n+1}^{(j)}}{p_n^{(j)}}\, \Tr(A_i\r_nA_i^*)\,.
$$
But we have
\begin{align*}
\frac{p_{n+1}^{(j)}}{p_n^{(j)}}\, \Tr(A_i\r_nA_i^*)&=\frac{\Tr(P_j\r_{n+1})}{\Tr(P_j\r_n)}\, \Tr(A_i\r_nA_i^*)\\
&=\frac{\Tr(P_j A_i\r_nA_i^*)}{\Tr(P_j\r_n)}\\
&=\frac{\Tr(A_i^{(j)}\r_n^{(j)}{A_i^{(j)}}^*)}{\Tr(P_j\r_n)}\,.
\end{align*}
We see that the transition probabilities only depend on the component $\r_n^{(j)}$. If we consider the sequence
$$
\wt\r_n^{(j)}=\frac{\r_n^{(j)}}{\tr(\r_n^{(j)})}
$$
we have
$$
\begin{cases}
 \wt\r_{n+1}^{(j)}=\frac{A_i^{(j)}\wt\r_n^{(j)}{A_i^{(j)}}^*}{\Tr\left(A_i^{(j)}\wt\r_n^{(j)}{A_i^{(j)}}^*\right)}& \\
X_{n+1}=X_n+e_i & 
\end{cases}
$$
with probability
$$
\frac{\Tr(A_i^{(j)}\wt\r_n^{(j)}{A_i^{(j)}}^*)}{\Tr(\wt\r^{(j)}_n)}\,.
$$
This exactly means that the sequence $(\wt\r_n^{(j)},X_n)_{n\geq 0}$ under $\PP^{(j)}$ has the law of the quantum trajectories associated to the family $(A_i^{(j)})_{i=1}^{2d}$. 
\end{proof}

\medskip
We now make the following hypothesis. 

\smallskip\noindent
(H2) Each of the mappings $\rL^{(j)}$ admits a unique invariant state $\r_\infty^{(j)}$. 

\smallskip\noindent
We then put $m^{(j)}=(m_1^{(j)},\ldots, m_{2d}^{(j)})$ where $m_k^{(j)}=\Tr(A_k\r_\infty^{(j)} A_k^*)$.

\smallskip\noindent
(H3) The $m^{(j)}$'s are all different.

\smallskip
Under these hypotheses we have the following result.

\begin{theorem}Under the hypotheses (H1'), (H2) and (H3) we have the following properties.

\smallskip\noindent
1) For all $j=1,\ldots,N$,
$$
\PP_\r\left[p_\infty^{(j)}=1\right]=p_0^{(j)}=1-\PP_\r\left[p_\infty^{(j)}=0\right]\,,
$$
that is, the vector $\vect{p}_n=(p_1^{(1)},\ldots,p_n^{(N)})$ converges to $(0,\ldots, 0, 1_j,0,\ldots,0)$ with probability $p_0^{(j)}$ (note that $\sum_j p_0^{(j)}=1$).

\smallskip\noindent
2) Conditionally to $p_\infty^{(j)}=1$ (that is, under the measure 
$$
\PP_\r\left[\,\cdot\,\vert\, p_\infty^{(j)}=1\right]=\PP_\r^{(j)})
$$
we have that $(\wt\r_n^{(j)}, X_n)$ has the law of the quantum trajectories associated to the family of matrices $(A_i^{(j)})_{i=1}^{2d}$. In particular, under this conditional law, the process
$$
\frac{\left(X_n-nm^{(j)}\right)}{\sqrt n}
$$
converges in distribution to the Gaussian distribution $\rN\left(0, {C^{(j)}}\right)$,
where ${C^{(j)}}$ is given by the same formula as in Theorem \ref{T:main} but for the family $(A_i^{(j)})$. 
\end{theorem}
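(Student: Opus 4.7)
The strategy is to treat part 2 as an essentially immediate corollary of Proposition \ref{Restriction} combined with Theorem \ref{T:main}, and to concentrate effort on part 1, which reduces to showing that $p_\infty^{(j)}\in\{0,1\}$ $\PP_\r$-almost surely. Once this $\{0,1\}$-valuedness is established, the martingale identity $\EE_\r[p_\infty^{(j)}]=p_0^{(j)}$ from Lemma \ref{lemma1} immediately delivers the probabilities announced in (1), and it identifies the reweighted measure $\PP_\r^{(j)}=(p_\infty^{(j)}/p_0^{(j)})\PP_\r$ with the conditional law $\PP_\r[\,\cdot\,\vert\,p_\infty^{(j)}=1]$.

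The key input is that Theorem \ref{T:main} can be applied to each block individually. By Proposition \ref{Restriction}, under $\PP_\r^{(j)}$ the sequence $(\wt\r_n^{(j)},X_n)$ is exactly the quantum trajectory of the family $(A_i^{(j)})_{i=1}^{2d}$, for which hypothesis (H2) supplies a unique invariant state $\r_\infty^{(j)}$. Theorem \ref{T:main} therefore yields the strong law of large numbers $X_n/n\to m^{(j)}$, $\PP_\r^{(j)}$-a.s. Translating this statement via the density $p_\infty^{(j)}/p_0^{(j)}$, one obtains
$$
\EE_\r\!\left[p_\infty^{(j)}\,\indic_{\{X_n/n\to m^{(j)}\}}\right]=p_0^{(j)}=\EE_\r\!\left[p_\infty^{(j)}\right],
$$
which means that, on a $\PP_\r$-set of full measure, $\{p_\infty^{(j)}>0\}\subset\{X_n/n\to m^{(j)}\}$.

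Hypothesis (H3) makes the events $E_j:=\{X_n/n\to m^{(j)}\}$, $j=1,\ldots,N$, pairwise disjoint. Summing Lemma \ref{lemma1} over $j$ gives $\sum_{j}p_n^{(j)}=1$ for every $n$, whence $\sum_{j}p_\infty^{(j)}=1$ a.s.; in particular at least one $p_\infty^{(j)}$ is positive, so $\bigcup_j E_j$ has full $\PP_\r$-measure. On each $E_{j_0}$, the inclusion from the previous step forces $p_\infty^{(k)}=0$ for $k\neq j_0$, and the normalization then forces $p_\infty^{(j_0)}=1$. This proves $p_\infty^{(j)}\in\{0,1\}$ a.s.\ and completes (1). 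For (2), combining the identification $\PP_\r^{(j)}=\PP_\r[\,\cdot\,\vert\,p_\infty^{(j)}=1]$ with Proposition \ref{Restriction} gives the announced Markovian law for $(\wt\r_n^{(j)},X_n)$, and then Theorem \ref{T:main} applied to the family $(A_i^{(j)})$ delivers the central limit theorem with covariance $C^{(j)}$.

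The main subtlety will be the transfer of almost sure statements through the change of measure: the $\PP^{(j)}$-null set on which $X_n/n$ fails to converge to $m^{(j)}$ lifts to a $\PP_\r$-null subset of $\{p_\infty^{(j)}>0\}$, and this is exactly what the equality of expectations above encodes. The degenerate case $p_0^{(j)}=0$ needs a brief separate word, since then $\PP_\r^{(j)}$ is not defined; but the martingale property makes $p_n^{(j)}=0$ identically, so both claims of the theorem hold trivially for that index. Everything else is bookkeeping: disjointness from (H3), the normalization $\sum_j p_\infty^{(j)}=1$, and the martingale expectation identity for $p_n^{(j)}$.
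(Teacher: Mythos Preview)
Your argument is essentially the paper's own: invoke Proposition \ref{Restriction} to identify the law under $\PP_\r^{(j)}$, use the law of large numbers from Theorem \ref{T:main} under each $\PP_\r^{(j)}$ to separate the blocks via (H3), deduce that the sets $\{p_\infty^{(j)}>0\}$ are pairwise $\PP_\r$-disjoint, and combine with $\sum_j p_\infty^{(j)}=1$ to force $p_\infty^{(j)}\in\{0,1\}$. The paper phrases the separation as mutual singularity of the $\PP_\r^{(j)}$'s, you phrase it as $\{p_\infty^{(j)}>0\}\subset E_j$; these are the same idea.

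One small point to tighten: hypothesis (H3) is stated for the full vector $m^{(j)}=(m_1^{(j)},\ldots,m_{2d}^{(j)})\in\RR^{2d}$ of measurement frequencies, not for the $\RR^d$-valued drift $\sum_i m_i^{(j)}e_i$ that governs $X_n/n$. Two blocks could in principle have distinct $m^{(j)}$'s in $\RR^{2d}$ yet the same drift in $\RR^d$, in which case your events $E_j=\{X_n/n\to m^{(j)}\}$ would not be disjoint. The paper avoids this by tracking the counting process $N_n(i)$ (number of jumps in direction $e_i$) and using the measurement-records law of large numbers (Theorem \ref{T:main2}) to get $N_n(i)/n\to m_i^{(j)}$ under $\PP_\r^{(j)}$. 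Replacing your $E_j$ by $\{\lim_n N_n/n=m^{(j)}\}$ fixes this with no change to the rest of your argument.
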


\smallskip
Note that the theorem above concretely means that the quantum trajectories in that case are a mixture of Open Quantum Random Walks of the form of Theorem \ref{T:main}. The associated stochastic process can be obtained as follows: with probability $p^{(0)}_j$ the process $(X_n)$ follows the law of the Open Quantum Random Walks with associated matrices $A_i^{(j)}$ and then satisfies the corresponding Central Limit Theorem with mean $m^{(j)}$ and covariance matrix $C^{(j)}$. 

\smallskip
\begin{proof}
By proposition \ref{Restriction}, we know that under $\PP_\r^{(j)}$ the sequence $(\wt\r_n^{(j)},X_n)$ has the law of the quantum trajectories associated to the family $(A_i^{(j)})$. As the mapping $\rL^{(i)}$ admits a unique invariant state we also know that if we consider
$N_n(i)$ to be the number of jumps $e_i$ made by the quantum trajectory up to time $n$, then  we have
$$
\lim \frac{1}n N_n(i)=m_i^{(j)}
$$
almost surely for the measure $\PP_\r^{(j)}$  using the law of large numbers for quantum measurements of Theorem \ref{T:main2}.
This implies that  the measures $\PP_\r^{(j)}$ are all singular since the $m^{(j)}$'s are all different by hypothesis (H3). Indeed, let 
$$
\aaa_j=\{ \lim \frac{1}n N_n=m^{(j)}\}.
$$
Then, if $j\neq j'$, obviously $\aaa_j\cap \aaa_{j'}=\emptyset$ and $\PP_\r^{(j)} (\aaa_j)=1$, $\PP_\r^{(j')} (\aaa_{j'})=1$.
Consider now the sets
$$
\O^{(j)}=\{p_\infty^{(j)}>0\}\,.
$$
Then, if $j\neq j'$, 
$$
\PP_\rho(\O_j\cap \O_{j'})=0.
$$ 
Indeed, otherwise since $
\PP_\r^{(j)}=\frac{p_\infty^{(j)}}{p_0^{(j)}}\, \PP_\r$, it would imply that $\PP_\r^{(j)}(\O_j\cap \O_{j'})>0$
and $\PP_\r^{(j')}(\O_j\cap \O_{j'})>0$. This is impossible since $\PP_\r^{(j)}$ and $\PP_\r^{(j')}$ are singular.
Finally, since $\sum_{j=1}^N p_\infty^{(j)}=1$, it implies that $\PP_\r $ a.s. one of the $p_\infty^{(j)}$ is 1 and the others 0. In particular, it implies that for all $j$
$$
\PP_\r(p_\infty^{(j)}=\hbox{0 or 1})=1.
$$


This implies that we have 
$$
\PP_\r^{(j)}=\PP_\r\left[\,\cdot\,\vert\, p_\infty^{(j)}=1\right]
$$
for 
$$
\PP_\r\left[\,\cdot\,\vert\, p_\infty^{(j)}=1\right]=\frac{p_\infty^{(j)}\, \PP_\r}{\PP_\r\left[p_\infty^{(j)}=1\right]}\,,
$$
but $\PP\left[p_\infty^{(j)}=1\right]=p_0^{(j)}$ since $(p_n^{(j)})$ is a martingale.

\smallskip
The conclusion now is a direct consequence of the Central Limit Theorem established for the chain $(X_n)$ but now associated to the family $(A_i^{(j)})_{i=1}^{2d}$
using previous proposition \ref{Restriction}. 

\end{proof}

\bigskip
{\timehuit S. Attal, N. Guillotin-Plantard, C. Sabot}
\vskip -1mm
{\timesept Universit\'e de Lyon}
\vskip -1mm
{\timesept Universit\'e de Lyon 1, C.N.R.S.}
\vskip -1mm
{\timesept Institut Camille Jordan}
\vskip -1mm
{\timesept 21 av Claude Bernard}
\vskip -1mm
{\timesept 69622 Villeurbanne cedex, France}

\bigskip

\end{document}